\documentclass[12pt,preprint]{elsarticle}

\usepackage{xspace,exscale}
\usepackage{fancybox}
\usepackage{graphicx}
\usepackage{color}
\usepackage{amsfonts}
\usepackage{amssymb}
\usepackage{url}


\usepackage{amsmath}
	\makeatletter
	\let\over=\@@over \let\overwithdelims=\@@overwithdelims
	\let\atop=\@@atop \let\atopwithdelims=\@@atopwithdelims
  	\let\above=\@@above \let\abovewithdelims=\@@abovewithdelims
  	\makeatother
\interdisplaylinepenalty=10000

\usepackage{fixltx2e}

\usepackage{rotating}


\hyphenation{op-tical net-works semi-conduc-tor}

\usepackage{ifpdf}

\usepackage{subfigure}
\usepackage{psfrag}

\usepackage[bbgreekl]{mathbbol}



\newcommand{\mreals}{\ensuremath{\mathbb{R}}}

\newcommand{\mproj}{\ensuremath{\mathrm{proj}}}
\newcommand{\FF}{\ensuremath{\mathbb{F}}}
\def\krawt{K^{(n)}}
\def\krawtk{K^{(k)}}
\newcommand{\PP}{\ensuremath{\mathbb{P}}}
\newcommand{\Sph}{\ensuremath{\mathbb{S}}}

\ifx\eqref\undefined
	\newcommand{\eqref}[1]{~(\ref{#1})}
\fi
\ifx\mod\undefined
	\def\mod{\mathop{\rm mod}}
\fi

\def\tr{\mathop{\rm tr}}

\def\EE{\mathbb{E}\,}

\def\PP{\mathbb{P}}

\def\diag{\mathop{\rm diag}}

\def\follows{\Longrightarrow}

\def\eqdef{\stackrel{\triangle}{=}}

\def\upto{\nearrow}
\def\downto{\searrow}

\def\unifto{\mathop{{\mskip 3mu plus 2mu minus 1mu%
	\setbox0=\hbox{$\mathchar"3221$}%
	\raise.6ex\copy0\kern-\wd0%
	\lower0.5ex\hbox{$\mathchar"3221$}}\mskip 3mu plus 2mu minus 1mu}}

\ifx\lesssim\undefined
\def\simleq{{{\mskip 3mu plus 2mu minus 1mu%
	\setbox0=\hbox{$\mathchar"013C$}%
	\raise.2ex\copy0\kern-\wd0%
	\lower0.9ex\hbox{$\mathchar"0218$}}\mskip 3mu plus 2mu minus 1mu}}
\else
\def\simleq{\lesssim}
\fi

\ifx\gtrsim\undefined
\def\simgeq{{{\mskip 3mu plus 2mu minus 1mu%
	\setbox0=\hbox{$\mathchar"013E$}%
	\raise.2ex\copy0\kern-\wd0%
	\lower0.9ex\hbox{$\mathchar"0218$}}\mskip 3mu plus 2mu minus 1mu}}
\else
\def\simgeq{\gtrsim}
\fi



\newcommand{\alert}[1]{\textcolor{red}{#1}}

%
%
\newif\ifmapx
{\catcode`/=0 \catcode`\\=12/gdef/mkillslash\#1{#1}}
\edef\jobnametmp{\expandafter\string\csname abmaps_apx\endcsname}
\edef\jobnameapx{\expandafter\mkillslash\jobnametmp}
\edef\jobnameexpand{\jobname}
\ifx\jobnameexpand\jobnameapx
\mapxtrue
\else
\mapxfalse
\fi

\long\def\apxonly#1{\ifmapx{\color{blue}#1}\fi}

\newtheorem{theorem}{Theorem}
\newtheorem{lemma}[theorem]{Lemma}
\newtheorem{corollary}[theorem]{Corollary}
\newtheorem{definition}{Definition}
\newtheorem{proposition}[theorem]{Proposition}
\newtheorem{remark}{Remark}
\newproof{proof}{Proof}


\begin{document}

\title{On metric properties of maps between Hamming spaces and related graph homomorphisms}

\author[yp]{Yury Polyanskiy}
\ead{yp@mit.edu}
\address[yp]{Department of Electrical Engineering 
and Computer Science, MIT, Cambridge, MA 02139 USA.}

\begin{abstract} 
A mapping of $k$-bit strings into $n$-bit strings is called an $(\alpha,\beta)$-map if $k$-bit strings which are more
than $\alpha k$ apart are mapped to $n$-bit strings that are more than $\beta n$ apart. This is a relaxation of the
classical problem of constructing error-correcting codes, which corresponds to $\alpha=0$. Existence of an $(\alpha,\beta)$-map is equivalent to 
existence of a graph homomorphism $\bar H(k,\alpha k)\to \bar H(n,\beta n)$, where $H(n,d)$ is a Hamming graph with
vertex set $\{0,1\}^n$ and edges connecting vertices differing in $d$ or fewer entries.

This paper proves impossibility results on achievable parameters $(\alpha,\beta)$ in the regime of $n,k\to\infty$ with a
fixed ratio ${n\over k}= \rho$. This is done by developing a general criterion for existence of graph-homomorphism based on the semi-definite relaxation of the independence
number of a graph (known as the Schrijver's $\theta$-function). The criterion is then evaluated using some known and
some new results from coding theory concerning the $\theta$-function of Hamming graphs.
As an example, it is shown that if $\beta>1/2$ and $n\over k$ -- integer, the ${n\over k}$-fold repetition map achieving
$\alpha=\beta$ is asymptotically optimal.
%

Finally, constraints on configurations of
points and hyperplanes in projective spaces over $\FF_2$ are derived.
\end{abstract}

\begin{keyword}
Error-correcting codes \sep graph homomorphism \sep Schrijver's $\theta$-function \sep projective geometry over $\FF_2$
\end{keyword}

\maketitle


\section{Introduction}

Hamming space $\FF_2^k$ of binary $k$-strings, equipped with the Hamming distance is one of the classical
objects studied in combinatorics. Its properties that received significant attention are the maximal packing densities, covering numbers,
isoperimetric inequalities, list-decoding properties, etc. In this paper we are interested in studying metric properties of maps 
$f:\FF_2^k\to\FF_2^n$ between Hamming spaces of different dimensions. 

Indeed, frequently one is interested in embedding $\FF_2^k$ into $\FF_2^n$ ``expansively'', i.e. 
so that points that were far apart in
$\FF_2^k$ remain far apart in $\FF_2^n$. Two immediate examples of such maps are: the 
error-correcting codes with rate $k/n$ and minimum distance $d$ satisfy
$$ |x-x'|>0 \implies |f(x) - f(x')| \ge d \,,$$
where here and below $|z| = \|z\|_0= |\{i: z_i \neq 0\}|$ is the Hamming weight of the vector. Another example is the  
 repetition coding with $f(x)$ mapping $x$ into $n\over
k$ repetitions of $x$. This map satisfies:
\begin{equation}\label{eq:rep}
	|x-x'|>\alpha k \implies |f(x)-f(x')|>\alpha n\,.
\end{equation}

With these two examples in mind, we introduce the main concept of this paper.
\begin{definition}\label{def:ab} A map $f:\FF_2^k \to \FF_2^n$ is called an $(\alpha, \beta; k,n )$-map (or simply an
$(\alpha,\beta)$-map) if $\alpha k$ and $\beta n$ are integers and for all $x,x'\in \FF_2^k$ we have either
\begin{equation}\label{eq:def_ab}
	|f(x) - f(x')| > \beta n \quad \mbox{or} \quad |x-x'|\le\alpha k\,, 
\end{equation}
where $\FF_2^k$ is the Hamming space of dimension $k$ over the binary field.
\end{definition}

We next define the Hamming graphs $H(n,d)$ for integer $d\in[0,n]$ as follows:
\begin{equation}\label{eq:hnd_def}
	V(H(n,d)) = \FF_2^n, \quad E(H(n,d)) = \{(x,x'): 0<|x-x'| \le d\}\,.
\end{equation}
\apxonly{The Johnson graphs $J(n,d,w)$ are defined for integers $d \le 2w, w\in[0, n/2]$ as an induced subgraph of $H(n,d)$ with
vertices
$$ V(J(n,d,w)) = \{x\in\FF_2^n: |x| = w\}\,. $$}
By $V(G), E(G)$ and $\bbalpha(G)$ we denote the vertices of $G$, the edges of $G$ and the cardinality of the maximal
independent set of $G$. All graphs in this paper are simple (without self-loops and multiple edges). 
By $\bar G$ we denote the (simple) graph
obtained by complementing $E(G)$ and deleting self-loops.

The relevance of Hamming graphs to this paper comes from the simple observation: 
$$ \exists \mbox{$(\alpha, \beta; k,n)$-map} \quad \iff \quad \bar H(k, \alpha k) \to \bar H(n, \beta n)\,, $$
where $G\to H$ denotes the existence of a graph homomorphism (see Section~\ref{sec:hom} for definition).

This paper focuses on proving negative results showing impossibility of certain parameters $(\alpha, \beta)$.
Note that there are a variety of methods that we can use to disprove existence of graph homomorphisms. For example, by
computing the shortest odd cycle we can prove
$$  \bar H(2,0) \not\to \bar H(4,2) \not\to \bar H(6,4) \not\to \bar H(8,6) \not\to \cdots\,. $$
In this paper, however,  we are interested in the methods that provide some useful information in the asymptotic
regime of $k\to \infty$, ${n\over k} \to \rho > 0$ and fixed $(\alpha,\beta)$.

\subsection{More on the concept of an $(\alpha,\beta)$-map}

Our original motivation for Definition~\ref{def:ab} was the following. Suppose the map $f:\FF_2^k\to\FF_2^n$ is used to 
protect the $k$ data bits against noise. 
If the points $x,x' \in \FF_2^k$ are far apart but $f(x)$ and $f(x')$ are close, i.e. if a map fails to 
satisfy~\eqref{eq:def_ab}, then $f(x)$ may be confused with $f(x')$ in a noisy environment. Consequently, this would lead
to a severe discrepancy if $x'$ is reported instead of $x$.

Below we briefly discuss how $(\alpha,\beta)$-property relates to some previously studied concepts.

First, a $(0,\beta)$-map is simply an error-correcting code of rate $k/n$ and minimum distance $1+\beta n$. Thus,
$(\alpha,\beta)$-condition is a \textit{relaxation of the minimum-distance property:} the separation of $1+\beta n$
is only guaranteed for data vectors $x,x'$ that were $1+\alpha k$ apart to start with. Practically, data may have some
structure guaranteeing some separation between feasible data-vectors (e.g. if $x$ is English test, changing one letter
is unlikely to result in a grammatically correct phrase). 

Second, in the inverse problem of reconstructing $x$ from a noisy version $y=f(x)+z$, one may proceed by computing a
pre-image of the Hamming ball of radius $\beta n/2$ around $y$. Then the $(\alpha,\beta)$-condition guarantees that the
points in the pre-image will all be close to each other. 
\apxonly{Note: People commented that for error-correction it may in fact be better if these ``lists'' had large
separation, since then it would be easier to weed off errors. This corresponds to the problem:
$$ 0<|x-x'| \le \gamma k \implies |f(x) - f(x')| > \delta n\,. $$
The bounds I got on this are:
$$ R \le R_{LP}(\delta)/h(\gamma/2)\\ R \le (1-h(\delta/2))/(1-R_{LP}(\gamma)).$$
}

Third, an $(\alpha,\beta)$-map can be used to convert a code with (normalized) minimal distance $> \alpha$ to a code of
minimal distance $> \beta$ at the expense of losing a factor ${k/n}$ in rate. This observation leads, on one hand, to a
non-trivial bound on achievable parameters $(\alpha,\beta)$, see~\eqref{eq:ccb} below. On the other hand, it also
suggests that $(\alpha,\beta)$-maps could be employed for adapting properties of a fixed mother code to the changing
noise environment.

Fourth, an $(\alpha,\beta)$-map with $n<k$ can be seen as a type of hashing in which one wants the hashes
of dissimilar strings to be also dissimilar\apxonly{\footnote{Note that the opposite requirement: $|x-x'|\le \alpha k\implies
|f(x)-f(x')|\le \beta n$ is trivially fulfilled by the constant map. Thus to avoid triviality in the study of this
variation it is necessary to put an extra constraint that $f$ be injective. This variation, however, does not reduce to the
question of existence of graph-homomorphisms and can not be treated with methods of this paper.}}. In fact, the
$(\alpha,\beta)$-condition is weakening of the locality-sensitive
hashing condition~\cite{IM98,AP06}.
\apxonly{Here is more. If $h:\FF_2^k\to\{0,1\}$ is a $(R, cR, P_1, P_2)$ LSH for $\FF_2^k$ then (in particular)
$$ \forall |x-x'|>cR: \PP[h(x) \neq h(x')] \ge 1-P_2\,. $$
Thus by producing an $n$-vector $(h_1,\ldots,h_n)$ we get with high probability
$$ |x-x'|>cR \quad \implies \quad |h^n(x) - h^n(x')| \ge n(1-P_2)\,.$$
}%

Finally, relaxation of the minimum-distance property taken in Definition~\ref{def:ab} may be motivated by availability
of the redundancy in the $k$-bit data. In information theory transmitting such data across a noisy channel is known as the joint
source-channel coding (JSCC) problem. Combinatorial variation, cf.~\cite{KMP12isit,YP15-cjscc_isit}, can be stated as
follows: say that $f:\FF_2^k \to \FF_2^n$ is a $(D,\delta)$-JSCC if there exists
a decoder map $g:\FF_2^n\to\FF_2^k$ with the property 
$$ \forall x\in \FF_2^k, z\in\FF_2^n: \quad |f(x) - z| \le \delta n \quad\implies\quad |x - g(z)| \le Dk\,.$$
The operational meaning is that a $(D,\delta)$-JSCC reduces the (adversarial) noise of strength $\delta$ in $n$-space to 
(adversarial) noise of strength $D$ in $k$-space. A special case of $D=\epsilon \delta$ was introduced by
Spielman~\cite{DS96} under the name of \textit{error-reducing} codes. The connection to Def.~\ref{def:ab} comes from the
simple observation:
$$ f\mbox{~is a $(D,\delta)$-JSCC} \quad\implies\quad f\mbox{~is a $(2D,2\delta)$-map}\,.$$
Thus, every impossibility result for $(\alpha,\beta)$-maps implies impossibility results for $(D,\delta)$-JSCC and
Spielman's error-reducing codes.

\section{Main results}

For $\alpha=0$ the best known bound to date is due to McEliece et al~\cite{MRRW77}. It says that any set $S \subset
\FF_2^n$ with $|y-y'|\ge \delta n$ for all $y,y'\in S, y\neq y'$ satisfies
\begin{equation}\label{eq:mrrw}
	{1\over n} \log |S| \le R_{LP2}(\delta) + o(1)\,,
\end{equation}
where $R_{LP2}(\delta) = 0$ for $\delta \ge 1/2$ and for $\delta < 1/2$:
	\begin{align} 	R_{LP2}(\delta) &= \min (1 -h(\alpha) + h(\beta))\,, \label{eq:rlp2}
	\end{align}		
	where $h(x)=-x \log x - (1-x) \log(1-x)$ and the minimum is taken over all $0\le\beta\le\alpha\le1/2$ satisfying
		$$ 2{\alpha (1-\alpha) - \beta(1-\beta)\over 1+2\sqrt{\beta (1-\beta)}} \le \delta\,. $$
	For distances $\delta < 0.273$ the solution is given by $\alpha=1/2$ and $R_{LP2}(\delta)$ has a simpler
	expression: 
	\begin{align}\label{eq:rlp1}
		R_{LP1}(\delta) = h(1/2 - \sqrt{\delta(1-\delta)})\,.
\end{align}	
	
Thus from~\eqref{eq:mrrw} we get
	$$ \exists \mbox{$(0,\beta;\, k,n)$-map} \quad \implies {k/n} \simleq R_{LP2}(\beta)\,.$$
A natural question is whether going from $\alpha=0$ to $\alpha>0$ may enable larger rates $k/n >
R_{LP2}(\beta)$. 

The first impulse could be that the answer is negative. Indeed, note that for $\alpha<1/2$ there is $2^{k+o(k)}$ points $x'$ s.t. $|x-x'|>\alpha k$. 
Thus it may seem that for $\alpha<1/2$ this relaxation yields 
no improvements (asymptotically) compared to $\alpha=0$. This observation is incorrect for two reasons. 
First, 
we do not require $f$ to be injective --- thus although all points $f(x')$ are far from $f(x)$, they may not all be
distinct. Second, even though each $x$ has many $x'$ satisfying $|x-x'|>\alpha k$, we in
fact need a collection $S\subset \FF_2^k$ s.t. $|x-x'|>\alpha k$ for all \textit{pairs} $x,x'\in S$. Only then we may
conclude that $f(S)$ is code in $\FF_2^n$ with large minimal distance.

Thus, $S$ needs to be an independent set in $H(k, \alpha k)$. How large can it be? To that end, we recall Turan's
theorem, cf.~\cite[Theorem IV.6]{bollobas2013modern}:
	\begin{equation}\label{eq:turan}
		\bbalpha(G) \ge {|V(G)|^2\over 2 (|E(G)| + |V(G)|)}\,.
	\end{equation}
Counting the number of edges of the graph $H(k, \alpha k)$ via Stirling's formula we get $|E(H(k,\alpha k))|= 2^k
\sum_{j=0}^{\alpha k} {k\choose j} = 2^{k+k h(\alpha) + o(k)}$. Therefore,
\begin{equation}\label{eq:gv1}
	\bbalpha(H(k, \alpha k)) \ge {(2^k)^2 / 2\over |E(H(k, \alpha k))| + 2^k} = 2^{k (1-h(\alpha)) + o(k)}\,.
\end{equation}
Consequently, if an $(\alpha, \beta)$-map exists then comparing~\eqref{eq:mrrw} and~\eqref{eq:gv1} we get
\begin{equation}\label{eq:ccb}
	k (1-h(\alpha)) + o(k) \le n R_{LP2}(\beta) + o(n)\,. 
\end{equation}

One natural way to improve the bound would be to notice that graphs $H(k,
\alpha k)$ have a lot of extra structure and perhaps simplistic estimate~\eqref{eq:gv1} via Turan's theorem can be improved. Unfortunately, despite decades of
work the lower bound~\eqref{eq:gv1}, known as the Gilbert-Varshamov bound, is asymptotically the best known. (For
non-binary alphabets, however, better bounds exist~\cite{tsfasman1982modular}.)

Instead, the next theorem improves~\eqref{eq:ccb} by establishing how another graph-function (the
$\theta$-function, see~\eqref{eq:ts_max} below) behaves under graph homomorphisms, and then applying known results on
$\theta$-function for Hamming graphs established by Samorodnitsky~\cite{AS01,navon2005delsarte} and McEliece et
al.~\cite{MRRW77}.
\begin{theorem}\label{th:main1} For every $\epsilon>0$ there exist a sequence $\delta_m\to 0$ s.t. if an $(\alpha,
\beta; k, n)$-map exists with $\alpha \ge \epsilon$ and $\beta\ge \epsilon$ then
	\begin{align} 
	   k R_{Sam}(\alpha) + k \delta_k &\le n R_{LP2}(\beta) + n \delta_n \qquad \label{eq:ccsam} \\
	   	\intertext{and}
	   k \left(1-h\left({\alpha\over2}\right)\right) + k \delta_k &\le 
			n \left(1-h\left({\beta\over2}\right)\right) + n \delta_n\,,  \label{eq:it} 
	\end{align}
	where 
		\begin{equation}\label{eq:rsam_def}
			R_{Sam}(\alpha) = {1\over2}\max\left(1-h(\alpha) + R_{LP1}(\alpha), 
		h(1-2\sqrt{\alpha(1-\alpha)}) \right) 
\end{equation}		
	for $\alpha<1/2$ and zero otherwise.
\end{theorem}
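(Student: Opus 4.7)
The plan is to push both inequalities through the equivalence $\exists(\alpha,\beta;k,n)\text{-map} \iff \bar H(k,\alpha k) \to \bar H(n,\beta n)$ from the introduction, combined with two different homomorphism-monotonicity principles for graph parameters.

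For~\eqref{eq:ccsam} the key technical lemma I would establish is that Schrijver's (and Lovász's) theta function of the complement is monotone under graph homomorphism: if $G\to H$ then $\theta(\bar G)\le \theta(\bar H)$. I would prove this by pulling back vector colorings: any orthonormal-type representation witnessing $\theta(\bar H)$ composes with the homomorphism $\phi\colon G\to H$ to give a valid representation of the same dimension witnessing $\theta(\bar G)$, since $\phi$ sends edges of $G$ to edges of $H$ and never collapses an edge to a loop. For Schrijver's nonnegative refinement the pullback preserves the sign constraint automatically. Applying this with $G=\bar H(k,\alpha k)$ and $H=\bar H(n,\beta n)$ gives $\theta(H(k,\alpha k))\le \theta(H(n,\beta n))$, and I then sandwich this chain between Samorodnitsky's lower bound~\cite{AS01,navon2005delsarte}, $\theta(H(k,\alpha k))\ge 2^{kR_{Sam}(\alpha)+o(k)}$, and the McEliece--Rodemich--Rumsey--Welch bound~\cite{MRRW77}, which upper-bounds $\theta$ (not merely $\bbalpha$) by $2^{nR_{LP2}(\beta)+o(n)}$. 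Taking logarithms and absorbing the error terms uniformly for $\alpha,\beta\ge\epsilon$ into $\delta_k,\delta_n$ yields~\eqref{eq:ccsam}.

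For~\eqref{eq:it} I would repeat the scheme at the coarser level of $\bbalpha$. The needed monotonicity is that for vertex-transitive $G,H$ with $G\to H$ one has $\bbalpha(G)/|V(G)|\ge \bbalpha(H)/|V(H)|$; this is immediate from monotonicity of the fractional chromatic number under homomorphism together with the identity $\chi_f(G)=|V(G)|/\bbalpha(G)$ valid for vertex-transitive graphs, and both Hamming graphs are vertex-transitive under bit translation. Substituting $G=\bar H(k,\alpha k),\,H=\bar H(n,\beta n)$ and invoking Kleitman's diameter theorem (or the Ahlswede--Khachatryan refinement) to identify $\bbalpha(\bar H(n,d))$ with the volume $2^{nh(d/(2n))+o(n)}$ of a Hamming ball of radius $d/2$ produces~\eqref{eq:it} after a short rearrangement.

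The step I expect to be most delicate is the homomorphism-monotonicity of $\theta$ packaged in a way compatible with the paper's general SDP-based criterion: the pullback argument above is transparent for Lovász's $\theta$, but formulating a single clean criterion that accommodates Schrijver's nonnegativity constraints, is sharp enough to exploit the best known $\theta$-bounds for Hamming graphs, and can be localized to the regime $\alpha,\beta\ge\epsilon$ requires some care. All remaining ingredients---the quantitative asymptotic forms of Samorodnitsky's and MRRW's bounds and their uniformity---are standard once the monotonicity is in place.
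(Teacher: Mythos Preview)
Your proposal is correct and uses precisely the ingredients the paper uses: monotonicity of $\theta_S$ of the complement under homomorphism (this is the paper's~\eqref{eq:cdp2}) sandwiched between Samorodnitsky's lower bound~\eqref{eq:tsam} and the MRRW upper bound~\eqref{eq:tmrrw} for~\eqref{eq:ccsam}, and the no-homomorphism lemma~\eqref{eq:nohom} combined with Kleitman's theorem~\eqref{eq:tkl} for~\eqref{eq:it}. The only difference is organizational: the paper does not prove Theorem~\ref{th:main1} directly but deduces it from the stronger ``robust'' Theorem~\ref{th:main2}, which bounds how many pairs must violate~\eqref{eq:def_ab} in any large subset. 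That strengthening requires a quantitative upper bound on $\bbalpha(\bar H(k,\alpha k)\ltimes \bar H(n,\beta n))$, obtained via the strong-product inequality of Lemma~\ref{th:sprod} (giving~\eqref{eq:thetas_b1} and~\eqref{eq:alpha_b2}), rather than merely the qualitative monotonicity statements~\eqref{eq:cdp2} and~\eqref{eq:nohom} you invoke. So your route is slightly shorter for Theorem~\ref{th:main1} alone, while the paper's extra machinery is what buys Theorem~\ref{th:main2}; the step you flagged as ``most delicate'' is in fact already available in the literature as~\eqref{eq:cdp2} and needs no further work.
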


\begin{remark} The bound~\eqref{eq:ccsam} is better for $n/k > 1$, while~\eqref{eq:it} is better for $n/k < 1$. See
Section~\ref{sec:disc} for evaluations.
\end{remark}

Note that by virtue of relying only on the number of edges in $\bar H(k, \alpha k)$ the bound in~\eqref{eq:ccb} is
robust in the sense that whenever $(\alpha, \beta)$ violate~\eqref{eq:ccb}, there will be great many pairs of 
$x,x'$ that violate~\eqref{eq:def_ab}. Here is a similar strengthening of Theorem~\ref{th:main1}.

\begin{theorem}\label{th:main2} For every $\epsilon>0$ there exist a sequence $\delta_m\to 0$ with the following
property. For every map $f:\FF_2^k \to \FF_2^n$, every $S\subset \FF_2^k$ of size $|S|>2^{k(1-\epsilon+\delta_k)+n\delta_n}$
and every $\alpha,\beta \in[\epsilon, 1]$ satisfying
\begin{align} 
    k R_{Sam}(\alpha) - k\epsilon &\ge n R_{LP2}(\beta)\label{eq:tm2_1}\\
	\intertext{or}
    k \left(1-h\left({\alpha\over2}\right)\right) - k\epsilon &\ge n \left(1-h\left({\beta\over2}\right)\right) 
    \label{eq:tm2_2}
\end{align}
 there exists a pair $x,x'\in S$ such
that
\begin{equation}\label{eq:tm2_2a}
	|x-x'| > \alpha k \quad\mbox{and}\quad |f(x)-f(x')| \le \beta n\,.
\end{equation}
In particular, there are at least $2^{k(1+\epsilon - \delta_k) - n\delta_n}$ un-ordered pairs $\{x, x'\} \subset \FF_2^k$
satisfying~\eqref{eq:tm2_2a}.
\end{theorem}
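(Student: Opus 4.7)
The plan is to derive Theorem~\ref{th:main2} as a subset-localized, quantitative version of Theorem~\ref{th:main1}: first establish the existence of one bad pair by adapting the $\theta'$-function chain of Theorem~\ref{th:main1} to the subgraph induced on $S$, and then upgrade to the pair-count via a Tur\'an-type argument on the graph of bad pairs.

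For existence, I would argue by contradiction: suppose no pair $x,x' \in S$ satisfies~\eqref{eq:tm2_2a}. Then $f|_S$ induces a graph homomorphism $\bar H(k,\alpha k)[S] \to \bar H(n,\beta n)$, and the Schrijver-$\theta'$ chain used in Theorem~\ref{th:main1} applies verbatim to this restricted homomorphism. The key ingredient is a vertex-transitive scaling estimate: since $\bar H(k,\alpha k)$ is invariant under the XOR action of $\FF_2^k$, averaging a maximizing Schrijver-feasible matrix over this action and passing to the principal submatrix indexed by $S$ gives
$$ \log_2 \theta'\bigl(\bar H(k,\alpha k)[S]\bigr) \ge \log_2|S| - k + \log_2 \theta'\bigl(\bar H(k,\alpha k)\bigr) \ge \log_2|S| - k + k R_{Sam}(\alpha) - o(k),$$
where the second inequality uses Samorodnitsky's lower bound on the full graph. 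Combined with the target-side upper bounds $\log_2 \theta'(\bar H(n,\beta n)) \le n R_{LP2}(\beta)+o(n)$ (MRRW) and $\log_2 \theta'(\bar H(n,\beta n)) \le n(1-h(\beta/2))+o(n)$ (ball volume), the hypothesis $|S| > 2^{k(1-\epsilon+\delta_k)+n\delta_n}$ together with either~\eqref{eq:tm2_1} or~\eqref{eq:tm2_2} produces the required contradiction.

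For the pair count, let $N$ denote the number of unordered pairs in $\FF_2^k$ satisfying~\eqref{eq:tm2_2a} and let $B$ be the graph on $\FF_2^k$ whose edges are those pairs. By Tur\'an's theorem~\eqref{eq:turan} applied to $B$, there is an independent set $I \subset \FF_2^k$ of $B$ with $|I| \ge 2^{2k}/(2(N+2^k))$. If $N$ were smaller than $2^{k(1+\epsilon-\delta_k)-n\delta_n-1}$, then $|I| > 2^{k(1-\epsilon+\delta_k)+n\delta_n}$, and applying the existence conclusion just proved to $S := I$ would yield a pair in $I$ satisfying~\eqref{eq:tm2_2a}, contradicting independence of $I$ in $B$. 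Hence $N$ meets the claimed bound, up to a factor of $2$ absorbable into the $\delta$-sequences.

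The principal technical obstacle is the vertex-transitive scaling estimate for Schrijver's $\theta'$: the entrywise nonnegativity constraint (absent in Lov\'asz's $\theta$) interacts non-trivially with restriction to an arbitrary $S$. I expect this to be handled by working with the symmetrized, Delsarte-type LP formulation of $\theta'$ for Hamming graphs---which is intrinsically invariant under the $\FF_2^k$ action---and controlling the restriction via standard Krawchouk polynomial manipulations.
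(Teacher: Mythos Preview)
Your overall architecture matches the paper's: the paper also reduces the existence claim to the bound $|S|\le\bbalpha(\bar H(k,\alpha k)\ltimes \bar H(n,\beta n))$ and then controls the latter via $\theta_S$ of the strong product, which unwinds to exactly your chain (induced-subgraph scaling plus $\theta_S(\bar X)\le\theta_S(\bar Y)$). The Tur\'an step for the pair count is identical. Two points deserve correction, one minor and one a real gap.

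First, your ``principal technical obstacle'' is not one. The scaling estimate $\theta_S(G')\ge \frac{|V(G')|}{|V(G)|}\theta_S(G)$ for an induced subgraph $G'$ of a vertex-transitive $G$ follows by the obvious averaging of a primal-feasible $M$ over the automorphism group and restricting; nonnegativity of entries survives restriction, so the Schrijver constraint causes no difficulty (this is the $H=\{pt\}$ case of Proposition~\ref{th:elbas_prod}). No Krawtchouk manipulations are needed. Also, throughout you should be writing $\theta_S(H(k,\alpha k)[S])$ etc.\ (i.e.\ $\theta_S$ of the \emph{complement} of the homomorphism source/target), not $\theta'(\bar H(\cdot))$; as written your displayed inequality is false by monotonicity.

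Second, and more importantly, your argument does \emph{not} deliver condition~\eqref{eq:tm2_2}. Your only source-side input is Samorodnitsky's $\log_2\theta_S(H(k,\alpha k))\ge kR_{Sam}(\alpha)-o(k)$. Pairing this with the target-side bound $\log_2\theta_S(H(n,\beta n))\le n(1-h(\beta/2))+o(n)$ yields the hybrid condition $kR_{Sam}(\alpha)-k\epsilon\ge n(1-h(\beta/2))$, which is \emph{not}~\eqref{eq:tm2_2}: one has $R_{Sam}(\alpha)<1-h(\alpha/2)$ in general (e.g.\ $R_{Sam}(1/2)=0$ while $1-h(1/4)>0$), so your chain is strictly weaker there. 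The paper obtains~\eqref{eq:tm2_2} by switching from $\theta_S$ to the independence-number bound~\eqref{eq:alpha_b2} together with Kleitman's theorem~\eqref{eq:tkl}; equivalently one can run your argument with $\theta_L$ in place of $\theta_S$, since $\theta_L(H(k,\alpha k))=2^{k(1-h(\alpha/2))+o(k)}$ by Lov\'asz reciprocity and~\cite{AS_Delsarte}. Either fix is short, but some separate argument is required for~\eqref{eq:tm2_2}.
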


\medskip

Next we consider an improved bound for the case of $\beta>1/2$. Notice that by Plotkin bound~\cite[Chapter 2.2]{MS1977} we have
$$ \bbalpha(H(n, \beta n)) \le 1 + {n \over 2\beta n + 2 -n}\,.$$
In particular, $\bar H(n, \beta n)$ does not contain $K_4$ whenever $\beta > {2/3}$. Therefore, any graph $G$ which
contains $K_4$ cannot map into $\bar H(n, \beta n)$. For example:
$$ \bar H(3, 1) \not \to \bar H(n, \beta n) \qquad \forall n\in\mathbb{Z}_+, \beta > 2/3\,.$$

The following elaborates on this idea:
\begin{theorem}\label{th:main3} For every $\epsilon > 0$ there exists $\delta_m\to0$ such that if there exists an
$(\alpha, \beta; k, n)$ map with $\beta>{1\over2}$ and $\alpha\in[{1\over 2} +\epsilon; 1-\epsilon]$ then
	\begin{equation}\label{eq:tm3}
		\alpha \ge \beta + {(2\beta -1)^2\over 2} \delta_k\,.
\end{equation}	
	Furthermore, for any map $f:\FF_2^k\to \FF_2^n$, any $\beta > 1/2$ and $\alpha\in[{1\over 2} +\epsilon; 1-\epsilon]$ and 
any set $S\subset \FF_2^k$ of size
	$$ |S| > 2^k {{2\beta \over 2\beta -1}\over {2\alpha\over 2\alpha -1} - \delta_k} $$
	there exists a pair of points $x,x'\in S$ satisfying~\eqref{eq:tm2_2a}.
\end{theorem}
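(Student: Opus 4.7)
The plan is to prove the second (density) statement first by a random‑translate plus Plotkin argument, and then to deduce the first inequality as the special case $S=\FF_2^k$. For the density version, fix $\alpha\in[1/2+\epsilon,\,1-\epsilon]$ and let $C_k\subset\FF_2^k$ be a code with pairwise Hamming distances strictly greater than $\alpha k$ whose size approaches the Plotkin bound $M_\alpha := 2\alpha/(2\alpha-1)$ from below, say $|C_k|\geq M_\alpha-\delta_k$ with $\delta_k\to 0$ (this achievability is the delicate step, discussed below). Draw $t\in\FF_2^k$ uniformly. Then $\EE_t|S\cap(C_k+t)|=\sum_{c\in C_k}\PP_t[c+t\in S]=|C_k|\,|S|/2^k$, so by pigeonhole there is $t^*$ with $|S\cap(C_k+t^*)|\geq |C_k|\,|S|/2^k$. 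Any two distinct elements of $S\cap(C_k+t^*)$ differ by a nonzero element of $C_k - C_k$ and are therefore at Hamming distance strictly greater than $\alpha k$; under the hypothesis that no violating pair lies in $S$, they map under $f$ to points strictly more than $\beta n$ apart, so $f$ is injective on this set and $f(S\cap(C_k+t^*))$ is a $(>\beta n)$-separated code in $\FF_2^n$. Since $\beta>1/2$, Plotkin's bound on the target yields $|f(S\cap(C_k+t^*))|\le M_\beta+o(1)$ with $M_\beta:=2\beta/(2\beta-1)$. Combining gives $|C_k|\,|S|/2^k\le M_\beta+o(1)$, which rearranges (after absorbing all vanishing errors into one sequence) to $|S|\le 2^k M_\beta/(M_\alpha-\delta_k)$; contrapositively, if $|S|$ exceeds this then a violating pair must exist, which is exactly the density claim.

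Specialising to $S=\FF_2^k$ recovers the first inequality: the existence of an $(\alpha,\beta)$-map means no violating pair on the whole space, whence $2^k\le 2^k M_\beta/(M_\alpha-\delta_k)$, i.e.\ $M_\alpha\le M_\beta+\delta_k$. Since $M(\gamma)=2\gamma/(2\gamma-1)$ is smooth and strictly decreasing on $(1/2,1]$ with $M'(\beta)=-2/(2\beta-1)^2$, first‑order inversion around $\gamma=\beta$ gives $\alpha\ge\beta + \tfrac{(2\beta-1)^2}{2}\tilde\delta_k + o(\tilde\delta_k)$ with $\tilde\delta_k=-\delta_k\to 0$, matching the claimed form after relabelling the vanishing sequence.

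The main obstacle is the achievability step: guaranteeing a code $C_k$ of size $|C_k|\ge M_\alpha-\delta_k$ with $\delta_k\to 0$ uniformly in $\alpha\in[1/2+\epsilon,1-\epsilon]$. Because $|C_k|$ is an integer while $M_\alpha$ is in general irrational, a direct Hadamard‑style construction only produces $|C_k|=\lfloor M_\alpha\rfloor$, leaving a residual gap that does not shrink with $k$. Closing this gap will likely require upgrading the random‑translate step to a fractional relaxation -- for instance by averaging $|S\cap(C+t)|$ over a probabilistic mixture of Plotkin‑extremal codes of slightly different sizes, or by replacing the integer clique number of $\bar H(k,\alpha k)$ by Schrijver's $\vartheta'$‑function, which in the regime $\alpha>1/2$ is a real‑valued invariant that tracks $M_\alpha$ continuously as $k\to\infty$. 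With either refinement the combinatorial heart -- random translation of a near‑Plotkin‑extremal code combined with Plotkin's bound on the image -- is unchanged.
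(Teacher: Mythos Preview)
Your overall plan is the right one, and your random-translate argument is precisely a hands-on proof of the inequality
\[
\bbalpha(\bar H(k,\alpha k)\ltimes \bar H(n,\beta n))\;\le\; 2^k\,\frac{\bbalpha(H(n,\beta n))}{\bbalpha(H(k,\alpha k))},
\]
which is the independence-number version of the bound the paper uses. The paper follows exactly the route you flag at the end: it replaces $\bbalpha$ by Schrijver's $\theta_S$, obtaining
\[
\bbalpha(\bar H(k,\alpha k)\ltimes \bar H(n,\beta n))\;\le\; 2^k\,\frac{\theta_S(H(n,\beta n))}{\theta_S(H(k,\alpha k))},
\]
and then inputs the (exact) Plotkin upper bound $\theta_S(H(n,\beta n))\le \tfrac{2\beta}{2\beta-1}$ together with a matching lower bound $\theta_S(H(k,\alpha k))\ge \tfrac{2\alpha}{2\alpha-1}-\delta_k$.

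Where your proposal has a genuine gap is that you stop just short of the two pieces that carry all the weight, and you somewhat understate the cost of each:
\begin{itemize}
\item The probabilistic-mixture idea does not help: averaging over a family of codes still lands you on a single integer-sized code achieving the maximum, so you recover nothing beyond $\lfloor M_\alpha\rfloor$.
\item Passing to $\theta_S$ is not a drop-in replacement in the random-translate step. The averaging argument proves the $\bbalpha$-inequality above but says nothing about $\theta_S$; one needs an SDP-level product inequality of the form $\theta_S(G\boxtimes H)\le \tfrac{|V(G)|}{\theta_S(\bar G)}\theta_S(H)$ for vertex-transitive $G$, which requires constructing an explicit dual certificate for the tensor product. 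This is Lemma~\ref{th:sprod} in the paper and is one of its main technical contributions.
\item Your claim that $\theta_S(H(k,\alpha k))$ ``tracks $M_\alpha$ continuously'' is exactly the missing lemma. The paper proves it (Lemma~\ref{th:plot}) by exhibiting a primal feasible polynomial $\hat f(\omega)=1+r{n\choose d+1}^{-1}K^{(n)}_{d+1}(\omega)$ suggested by Levenshtein's Plotkin-optimal codes, and then verifying $\hat f\ge 0$ via nontrivial pointwise estimates on Krawtchouk polynomials in the oscillatory region. This is where the $\delta_k\to 0$ actually comes from, and it does not follow from any soft continuity argument.
\end{itemize}
So your ``combinatorial heart is unchanged'' is too optimistic: once you move to $\theta_S$, both the product step and the Plotkin-achievability step have to be redone at the SDP level, and that is essentially the whole proof.
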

\begin{remark} Considering the argument preceding the theorem, it should not be so surprising that the relation between
$\alpha$ and $\beta$ in~\eqref{eq:tm3} is independent of the rate $k\over n$. 
The significance of~\eqref{eq:tm3} is that for the case of ${n \over k} \in \mathbb{Z}$ this bound is
(asymptotically) optimal, as the
example of the repetition map~\eqref{eq:rep} clearly shows. \apxonly{This corresponds to the classical fact that parameters of
the best codes for minimal distance $d  > n/2$ are known exactly thanks to results of Plotkin and Levenshtein,
cf~\cite[Chapter 2, Theorem 8]{MS1977}.} For linear $(\alpha,\beta)$-maps the result was shown in~\cite[Theorem
8]{YP15-cjscc_isit} by studying properties of the generator matrix.
\end{remark}

When applied to linear maps $\FF_2^k\to\FF_2^n$ Theorems~\ref{th:main2} and~\ref{th:main3} have the following geometric
interpretations:
\begin{corollary}\label{th:cor2} For every $\epsilon>0$ there exists a sequence $\delta_\ell\to 0$ with the following
property. Fix any two
lists of (possibly repeated) points $u_1, \ldots, u_k$ and $v_1,\ldots, v_n$ in projective space $\PP^{m-1}(\FF_2)$ s.t.
that they are not all contained in a codimension 1 hyperplane. Fix any $\alpha,\beta \in [\epsilon,1]$ s.t.
	\begin{equation}\label{eq:corA}
		m > k (1-R_{Sam}(\alpha)+\delta_k) + n (R_{LP2}(\beta) +\delta_n) 
	\end{equation}	
	or
	\begin{equation}\label{eq:corB} m > k (h\left(\alpha/2\right) +\delta_k) +n (1-h(\beta/2)+\delta_n)\,.
	\end{equation}	
	There exists a hyperplane $H$ of codimension 1 in $\PP^{m-1}$ such that
	\begin{equation}\label{eq:ab_hlin}
	\#\{j: v_j\in H\} \ge (1-\beta) n\,, \quad\#\{i: u_i \in H\} < (1-\alpha) k \mbox{~or~}=k\,.
	\end{equation}	
\end{corollary}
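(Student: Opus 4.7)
The plan is to recast the conclusion as the existence of a low-weight codeword in a specific linear code, and then reduce to Theorem~\ref{th:main2} combined with the MRRW bound~\eqref{eq:mrrw}. Arrange the $u_i$ as columns of $U\in\FF_2^{m\times k}$ and the $v_j$ as columns of $V\in\FF_2^{m\times n}$, and define
$$
C=\{(\phi^T U,\phi^T V):\phi\in\FF_2^m\}\subset\FF_2^{k+n}.
$$
The spanning hypothesis makes $\phi\mapsto(\phi^T U,\phi^T V)$ injective, so $\dim C=m$. Since a hyperplane $H=\phi^\perp$ in $\PP^{m-1}(\FF_2)$ satisfies $u_i\in H$ iff the $i$-th entry of $\phi^T U$ vanishes (and analogously for $v_j$), condition~\eqref{eq:ab_hlin} is equivalent to exhibiting $(a,b)\in C\setminus\{0\}$ with $|b|\le\beta n$ and either $|a|>\alpha k$ or $a=0$.

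Argue by contradiction. Let $\pi_1$ denote projection onto the first $k$ coordinates, put $L=\pi_1(C)\subset\FF_2^k$ with $r=\dim L\le k$, and $D=\{b:(0,b)\in C\}\subset\FF_2^n$, so that $\dim D=m-r$. The assumption that no good codeword exists splits as
\begin{enumerate}
\item[(A)] $D$ has minimum distance strictly greater than $\beta n$, since any nonzero $\phi$ with $\phi^T U=0$ and $|\phi^T V|\le\beta n$ would realize the $a=0$ branch;
\item[(B)] any linear section of $\pi_1|_C$ yields a linear map $g:L\to\FF_2^n$ such that $a\in L$, $|a|>\alpha k$ forces $|g(a)+d|>\beta n$ for \emph{every} $d\in D$; in particular, $(g,L)$ is a linear $(\alpha,\beta)$-map on the subspace $L$ in the Hamming metric inherited from $\FF_2^k$.
\end{enumerate}
Applying~\eqref{eq:mrrw} to (A) gives $m-r\le n R_{LP2}(\beta)+n\delta_n^{(A)}$. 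Extending $g$ arbitrarily to $\hat g:\FF_2^k\to\FF_2^n$ and applying Theorem~\ref{th:main2} with $S=L$ gives $r\le k(1-\epsilon+\delta_k^{(T2)})+n\delta_n^{(T2)}$ for any $\epsilon\in(0,R_{Sam}(\alpha)-(n/k)R_{LP2}(\beta)]$ permitted by~\eqref{eq:tm2_1}. Adding the two bounds is meant to contradict~\eqref{eq:corA}. The companion hypothesis~\eqref{eq:corB} is handled analogously, replacing~\eqref{eq:tm2_1} by~\eqref{eq:tm2_2} and~\eqref{eq:mrrw} by the Hamming (sphere-packing) bound on $\dim D$.

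The main obstacle is combining these two inequalities without double-counting a term of size $nR_{LP2}(\beta)$. Summing naively at the maximal slack $\epsilon=R_{Sam}(\alpha)-(n/k)R_{LP2}(\beta)$ yields only $m\le k(1-R_{Sam}(\alpha))+2nR_{LP2}(\beta)+o(k+n)$, which fails to contradict~\eqref{eq:corA} whenever $\beta<1/2$. Closing this gap forces one to exploit the full strength of (B)—that every coset $g(a)+D\subset\FF_2^n$ is disjoint from the Hamming ball of radius $\beta n$ whenever $|a|>\alpha k$, i.e.\ that the induced map $L\to\FF_2^n/D$ enjoys the $(\alpha,\beta)$-property in the coset (syndrome) weight—and to feed this either into the $\theta$-function argument behind Theorem~\ref{th:main2} applied directly to $C\subset\FF_2^{k+n}$, or into an MRRW-type estimate on the quotient Hamming space $\FF_2^n/D$. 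Once such a joint bound replaces the two separate bounds, collapsing the remaining $o(1)$ slacks into a single sequence $\delta_\ell\to 0$ is routine.
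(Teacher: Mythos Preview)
Your setup is exactly right: the code $C=\{(\phi^T U,\phi^T V):\phi\in\FF_2^m\}$ has dimension $m$, and the negation of~\eqref{eq:ab_hlin} says precisely that every nonzero $(a,b)\in C$ satisfies $0<|a|\le\alpha k$ or $|b|>\beta n$. But the split into $L=\pi_1(C)$ and $D=\ker\pi_1$ is a detour that, as you yourself observe, loses a full $nR_{LP2}(\beta)$ and cannot be repaired by arguing separately on $L$ and $D$. The quotient-space idea you float at the end (MRRW on $\FF_2^n/D$) would require a version of the linear-programming bound on coset leaders, which is not available.

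The fix you mention in passing --- ``feed this into the $\theta$-function argument applied directly to $C\subset\FF_2^{k+n}$'' --- is in fact the whole proof, and it is one line once you notice what your contradiction hypothesis actually says. The condition ``$0<|a|\le\alpha k$ or $|b|>\beta n$ for all nonzero $(a,b)\in C$'' is exactly the statement that $C$ is an independent set in the strong product $\bar H(k,\alpha k)\boxtimes H(n,\beta n)$. Now recall that the proof of Theorem~\ref{th:main2} proceeds via~\eqref{eq:thetas_b1}, which in turn passes through the strong product (the inequality $\theta_S(X\ltimes Y)\le\theta_S(X\boxtimes\bar Y)$ is the first step). Hence the chain~\eqref{eq:tm22}--\eqref{eq:tm23} already gives
\[
2^m\;\le\;\bbalpha\bigl(\bar H(k,\alpha k)\boxtimes H(n,\beta n)\bigr)\;\le\;2^k\,\frac{\theta_S(H(n,\beta n))}{\theta_S(H(k,\alpha k))}\;\le\;2^{\,k(1-R_{Sam}(\alpha))+nR_{LP2}(\beta)+k\delta_k+n\delta_n},
\]
which contradicts~\eqref{eq:corA} with no factor of $2$. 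The second case~\eqref{eq:corB} uses~\eqref{eq:alpha_b2} and Kleitman's~\eqref{eq:tkl} in place of~\eqref{eq:thetas_b1}, again on the strong product, giving the bound $2^{kh(\alpha/2)+n(1-h(\beta/2))+o(k+n)}$. There is no need to introduce $L$, $D$, or the section $g$ at all.
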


\begin{corollary} \label{th:cor3} For every $\epsilon > 0$ there exists $\delta_\ell \to 0$ with the following property. Fix any two
lists of (possibly repeated) points $u_1, \ldots, u_k$ and $v_1,\ldots, v_n$ in projective space $\PP^{m-1}(\FF_2)$ s.t.
that they are not all contained in a codimension 1 hyperplane. Fix any $\beta > 1/2$ and $\alpha\in[{1\over 2}
+\epsilon; 1-\epsilon]$ s.t.
\begin{equation}\label{eq:corC}
	m > k + \log_2 {2\beta\over 2\beta -1} - \log_2\left({2\alpha\over 2\alpha -1}-\delta_k\right)\,. 
\end{equation}
Then there exists a hyperplane $H$ of codimension 1 in $\PP^{m-1}$ satisfying~\eqref{eq:ab_hlin}.
\end{corollary}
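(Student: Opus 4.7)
The plan is to reduce Corollary~\ref{th:cor3} to Theorem~\ref{th:main3} via the standard dictionary between configurations of projective points and linear maps. Choose lifts of $u_i$, $v_j$ to nonzero vectors in $\FF_2^m$ and assemble them into matrices $U \in \FF_2^{m \times k}$ and $V \in \FF_2^{m \times n}$; form the linear maps $L = U^T \colon \FF_2^m \to \FF_2^k$ and $M = V^T \colon \FF_2^m \to \FF_2^n$. The spanning assumption on $\{u_i\} \cup \{v_j\}$ becomes $\ker L \cap \ker M = \{0\}$. A codimension-one hyperplane $H \subset \PP^{m-1}$ corresponds to a nonzero $\phi \in \FF_2^m$, and a short check shows that condition \eqref{eq:ab_hlin} for $H_\phi$ is equivalent to $|M\phi| \le \beta n$ together with ``$L\phi = 0$ or $|L\phi| > \alpha k$''. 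The hypothesis $m > k + \log_2\frac{2\beta}{2\beta - 1} - \log_2(\frac{2\alpha}{2\alpha - 1} - \delta_k)$ rearranges to $2^m > 2^k \cdot \frac{2\beta/(2\beta - 1)}{2\alpha/(2\alpha - 1) - \delta_k}$, matching the size threshold of Theorem~\ref{th:main3} when the role of $|S|$ is played by $2^m$.

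The principal case is $L$ injective. Take $S := L(\FF_2^m) \subseteq \FF_2^k$ (of size $2^m$), define $f \colon S \to \FF_2^n$ by $f(L\phi) = M\phi$ (well-defined by injectivity), and extend $f$ arbitrarily to $\FF_2^k$. Theorem~\ref{th:main3} supplies a pair $x, x' \in S$ with $|x - x'| > \alpha k$ and $|f(x) - f(x')| \le \beta n$. Setting $\phi := L^{-1}(x) - L^{-1}(x') \ne 0$ yields $|L\phi| > \alpha k$ and $|M\phi| \le \beta n$, so $H_\phi$ satisfies \eqref{eq:ab_hlin} in the case $\#\{i : u_i \in H\} < (1-\alpha)k$.

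The remaining case is $\ker L \ne \{0\}$, which necessarily occurs when $m > k$. Split further: if some nonzero $\phi_0 \in \ker L$ satisfies $|M\phi_0| \le \beta n$, then $H_{\phi_0}$ realizes the case $\#\{i : u_i \in H\} = k$ of \eqref{eq:ab_hlin} and we are done. Otherwise every nonzero $\phi \in \ker L$ obeys $|M\phi| > \beta n$; since $M|_{\ker L}$ is injective (by $\ker L \cap \ker M = \{0\}$), $M(\ker L)$ is a linear code in $\FF_2^n$ of minimum distance exceeding $\beta n$, and Plotkin's bound (valid since $\beta > 1/2$) forces $|\ker L| \le \frac{2\beta}{2\beta - 1}$. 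Consequently $\rank L \ge m - \log_2\frac{2\beta}{2\beta - 1}$, so $|S| = 2^{\rank L}$ remains large enough (after invoking the hypothesis on $m$) to apply Theorem~\ref{th:main3} on $S$ with $f(x) := M\phi_x$ for any choice of representatives $\phi_x \in L^{-1}(x)$; the pair produced by the theorem again furnishes the required $\phi = \phi_x - \phi_{x'}$.

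The main technical obstacle is the tightness of the Plotkin step. A naive combination of the kernel bound $|\ker L| \le \frac{2\beta}{2\beta - 1}$ with the threshold of Theorem~\ref{th:main3} appears to lose one factor of $\frac{2\beta}{2\beta - 1}$ relative to what the hypothesis on $m$ supplies. I expect this to be recovered either by re-running the argument underlying Theorem~\ref{th:main3} directly in the linear setting, where preimages of $f$ are cosets of $\ker L$ and hence admit sharper anticode estimates, or by exploiting the extra rigidity that $f$'s image lies inside the linear code $M(\FF_2^m)$ to sharpen the Plotkin count on the image side by the missing factor.
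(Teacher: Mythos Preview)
Your Case 1 and subcase 2a are correct, but subcase 2b is a genuine gap that you acknowledge without closing. The factor $\tfrac{2\beta}{2\beta-1}$ really is lost if you insist on reducing to the \emph{statement} of Theorem~\ref{th:main3}: that statement concerns a single function $f:\FF_2^k\to\FF_2^n$, and hence only sees independent sets of the homomorphic product $\bar H(k,\alpha k)\ltimes\bar H(n,\beta n)$, which project injectively to $\FF_2^k$. Once you project via $L$ you cannot avoid losing $|\ker L|$.

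The paper sidesteps the case split by working with the \emph{strong} product $\bar H(k,\alpha k)\boxtimes H(n,\beta n)$ instead. Assuming no hyperplane satisfies \eqref{eq:ab_hlin}, one checks (this is item~1 in Section~\ref{sec:lincode}) that the row span of $[U\,|\,V]$, a set of exactly $2^m$ points in $\FF_2^k\times\FF_2^n$, is an independent set in this strong product. The proof of Theorem~\ref{th:main3} (via \eqref{eq:thetas_b1}, Lemma~\ref{th:sprod} and Lemma~\ref{th:plot}) already bounds
\[
\bbalpha\bigl(\bar H(k,\alpha k)\boxtimes H(n,\beta n)\bigr)\le 2^k\,\frac{2\beta/(2\beta-1)}{2\alpha/(2\alpha-1)-\delta_k},
\]
contradicting the hypothesis on $m$ directly. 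The relevant structural difference from $\ltimes$ is that an independent set in $X\boxtimes\bar Y$ may have several points in a single $X$-fiber, as long as they are pairwise non-adjacent in $\bar Y$; your subcase-2b condition ``every nonzero $\phi\in\ker L$ has $|M\phi|>\beta n$'' is exactly what makes each fiber (a coset of $\ker L$, imaged by $M$) an independent set of $H(n,\beta n)$. So the full $2^m$ points are usable without projection and no factor is lost. Your first suggested fix is groping toward this; your second (sharpening the image-side Plotkin count) does not recover the missing factor.
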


Note that by identifying homogeneous coordinates with affine coordinates we can establish set-isomorphism
$\PP^{m-1}(\FF_2)$ and $\FF_2^m\setminus \{0\}$. Thus, previous corollaries can be equivalently restated in terms of 
$\FF_2^m$. For example, for any $\epsilon>0$, all $k$ sufficiently large and all $n$: \textit{Fix some basis of
$\FF_2^k$ and arbitrary non-zero points $v_1,\ldots,v_n \in \FF_2^k$. Then there exists a $(k-1)$-subspace containing
$\ge {n\over 4}$ $v$-points and $<\left({1\over4}+\epsilon\right)k$ basis vectors.} Note that this is a manifestly
$\FF_2$-property since over large fields one could select $v$-points (when $n>4k$) so that no ${n\over 4}$ of them are
contained in a $(k-1)$-subspace. \apxonly{E.g. Vandermonde matrix.}

\medskip
The rest of the paper is organized as follows: Section~\ref{sec:hom} proves a few results on graph homomorphisms. In
Section~\ref{sec:hamming} these results are applied to prove Theorems~\ref{th:main1}-\ref{th:main3} and
Corollaries~\ref{th:cor2}-\ref{th:cor3}. We conclude in
Section~\ref{sec:disc} with discussion, numerical evaluations and some open problems.

\section{Graph homomorphisms}\label{sec:hom}

Let us introduce notation to be used in the remainder of the paper:
\begin{align} 
   \theta_S(G) &\eqdef \max \{\tr JM: \tr M=1, M\succeq 0, M|_{E(G)} = 0, M_{v,v'} \ge 0\,\, \forall v,v'\}
   			\label{eq:ts_max}\\
   		&= \min\{\lambda_{\max}(C): C=C^T, C|_{E(G)^c} \ge 1\} \label{eq:ts_min}\\
   \theta_L(G) &\eqdef \max \{\tr JM: \tr M=1, M\succeq 0, M|_{E(G)} = 0\}\\
   		&= \min \{\lambda_{\max}(C): C=C^T, C|_{E(G)^c}=1\}\,,\label{eq:tl_min}
\end{align}   
where $M$ is a positive-semidefinite matrix of order $|V(G)|$, $J$ is an all-one matrix of the same size,
$\lambda_{\max}(\cdot)$ denotes the maximal eigenvalue and $M|_{S}$ denotes a subset $\{M_{i,j}: (i,j)\in S\}$ of the
entries of matrix $M$, so that $M|_{E(G)} = \{M_{i,j}: i\sim j\mbox{~ in~} G\}$.   $\theta_S(G)$ and
$\theta_L(G)$ are the 
Schrijver and Lov\'asz $\theta$-functions, respectively\footnote{Other authors write $\theta(G)$ for $\theta_L(G)$ 
and any of $\theta'(G)$, $\theta_{1/2}(G)$ or $\theta^-(G)$ for $\theta_S(G)$.}. 

We recall a few properties of the $\theta$-function (one may
consult~\cite{goemans1997semidefinite} for more): 
\begin{itemize}
\item Both $\theta$-functions are typically used to upper bound the independence number
of a graph:
\begin{equation}\label{eq:lova_0}
	\bbalpha(G) \le \theta_S(G) \le \theta_L(G)\,.
\end{equation}
\item $\theta_L(\cdot)$, while yielding a looser bound on $\bbalpha(\cdot)$, is multiplicative under strong product of
graphs\footnote{The strong product $G \boxtimes H$ is a simple graph with vertex set given by $V(G)\times V(H)$ and edges $(g_1,h_1)\sim (g_2,h_2): (g_1\sim
	g_2 \mbox{~or~} g_1=g_2)\mbox{~and~}(h_1\sim h_2 \mbox{~or~} h_1=h_2)$.} as shown in~\cite{LL79}:
 \begin{equation}\label{eq:lova_1}
 	\theta_L(G \boxtimes H) = \theta_L(G)\theta_L(H)\,. 
\end{equation} 
\item For a vertex transitive graphs, we also have reciprocity~\cite{LL79}:
\begin{equation}\label{eq:lova_2}
			\theta_L(G) \theta_L(\bar G) = |V(G)|\,.
\end{equation}		
\end{itemize}
Our main technical contribution in this section is the following partial generalization
of~\eqref{eq:lova_1}-\eqref{eq:lova_2} to $\theta_S$:
\begin{lemma}\label{th:sprod} Let $G$ be vertex transitive, then
\begin{align}
	\theta_S(G\boxtimes H) &\le {|V(G)|\over \theta_S(\bar G)} \theta_S(H)\,.\label{eq:sprod}
\end{align}
	\apxonly{Also:
	\begin{equation}
 	\bbalpha(G\boxtimes H) \le {|V(G)|\over \bbalpha(\bar G)} \bbalpha(H)\,.\label{eq:sproda}
	\end{equation}
	}
\end{lemma}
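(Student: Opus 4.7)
The plan is to exhibit a feasible $M_H$ for $\theta_S(H)$ whose objective value $\tr(JM_H)$ lower-bounds $\theta_S(\bar G)\theta_S(G\boxtimes H)/|V(G)|$; rearranging then yields~\eqref{eq:sprod}. By averaging over $\mathrm{Aut}(G)$, I assume at the outset that the primal optima $M^\ast$ for $\theta_S(G\boxtimes H)$ (with $\mathrm{Aut}(G)$ acting on the first coordinate) and $M_{\bar G}$ for $\theta_S(\bar G)$ are $\mathrm{Aut}(G)$-invariant; vertex-transitivity then forces $M_{\bar G}(g,g)=1/|V(G)|$, and the matrix $\tilde M := |V(G)|\,M_{\bar G}$ is PSD, entrywise non-negative, has unit diagonal, is supported on $\{g_1=g_2\}\cup E(G)$, and satisfies $\tr(J\tilde M)=|V(G)|\theta_S(\bar G)$.

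The candidate is $M_H(h_1,h_2):=\sum_{g_1,g_2}\tilde M(g_1,g_2)\,M^\ast((g_1,h_1),(g_2,h_2))$. A Cholesky factorisation of $\tilde M$ rewrites $M_H$ as a sum of Gram-type contributions $V_k^T M^\ast V_k$, giving $M_H\succeq 0$; entrywise non-negativity is immediate from non-negativity of both factors; $M_H$ vanishes on $E(H)$ because on the support of $\tilde M$ we have $g_1$ equal or adjacent to $g_2$, so $h_1\sim h_2$ would make $(g_1,h_1)\sim (g_2,h_2)$ in $G\boxtimes H$ and kill the corresponding entry of $M^\ast$; and $\tr M_H=1$ because the only surviving diagonal contribution comes from $g_1=g_2$ (when $g_1\neq g_2$ with $g_1\sim g_2$ the entry $M^\ast((g_1,h),(g_2,h))$ already vanishes). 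Feasibility of $M_H$ for $\theta_S(H)$ thus yields $\theta_S(H)\ge \tr(\tilde M\cdot S)$, where $S(g_1,g_2):=\sum_{h_1,h_2}M^\ast((g_1,h_1),(g_2,h_2))$ is PSD (it equals $W^T M^\ast W$ with $W$'s $g$-th column equal to $e_g\otimes\mathbf{1}_{V(H)}$), symmetric, $\mathrm{Aut}(G)$-invariant, and has $\tr(JS)=\theta_S(G\boxtimes H)$.

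The crucial remaining step is the ``trivial-component'' inequality
\begin{equation*}
  \tr(AB)\;\ge\;\frac{\tr(JA)\,\tr(JB)}{|V(G)|^2},
\end{equation*}
valid for any two symmetric PSD matrices $A,B$ on $\mathbb{R}^{V(G)}$ invariant under a transitive group action. To prove it, transitivity makes $\mathbf{1}$ an eigenvector of every such matrix, giving the orthogonal splitting $A=a_0P_{\mathbf{1}}+A^\perp$ with $a_0=\tr(JA)/|V(G)|$, $P_{\mathbf{1}}=\mathbf{1}\mathbf{1}^T/|V(G)|$, and both summands PSD; the cross terms in $\tr(AB)$ vanish because $P_{\mathbf{1}}B^\perp=0$, and $\tr(A^\perp B^\perp)\ge 0$ since both are PSD. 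Applying this to $(A,B)=(\tilde M,S)$ and chaining with $\theta_S(H)\ge\tr(\tilde M\cdot S)$ gives $\theta_S(H)\ge\theta_S(\bar G)\theta_S(G\boxtimes H)/|V(G)|$, which is~\eqref{eq:sprod}. The main obstacle is identifying the correct weighting matrix $\tilde M$—one that both respects the edge-free structure of $M^\ast$ under the $V(H)$-contraction and is PSD so that the trivial-component inequality can be invoked.
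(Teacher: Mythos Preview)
Your argument is correct. The paper, however, proceeds dually: it takes the optimal \emph{primal} matrix $M$ for $\theta_S(\bar G)$ and the optimal \emph{dual} matrix $C$ for $\theta_S(H)$ and writes down an explicit feasible dual certificate
\[
\hat C \;=\; c_1 I \;+\; c_2\, M\otimes\bigl(C-\lambda_{\max}(C)I-J\bigr)\;+\;J
\]
for $\theta_S(G\boxtimes H)$, then checks $\lambda_{\max}(\hat C)\le c_1=|V(G)|\theta_S(H)/\theta_S(\bar G)$. You instead start from the optimal \emph{primal} $M^\ast$ for $\theta_S(G\boxtimes H)$, contract it against $\tilde M=|V(G)|M_{\bar G}$ to obtain a feasible primal $M_H$ for $\theta_S(H)$, and close the loop with the trivial-component inequality $\tr(AB)\ge \tr(JA)\tr(JB)/|V(G)|^2$ for $\mathrm{Aut}(G)$-invariant PSD matrices. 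The two routes are natural duals of one another; indeed the paper sketches essentially your construction in a parenthetical remark (``Alternatively\ldots take any $\Gamma$ from the feasible set of the primal\ldots and show that $N_{y,y'}=c\sum_{x,x'}\Gamma_{xy,x'y'}\bar M_{x,x'}$ is feasible''). What the paper's choice buys is an explicit dual witness, which later (in the linear-programming discussion) it identifies with the product polynomial $g(x,y)=f_1(x)g_1(y)$ and uses as a baseline to search for better bivariate certificates. What your route buys is conceptual cleanliness: feasibility of $M_H$ is almost automatic from the support pattern of $\tilde M$, and the only analytic ingredient is the one-line eigenspace splitting $A=a_0P_{\mathbf 1}+A^\perp$, which avoids the somewhat opaque verification of~\eqref{eq:ts_q2} and the eigenvalue estimate~\eqref{eq:ts_q1}.
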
 
Proof is given at the end of this section. We next discuss its application to existence of graph homomorphisms.

\apxonly{Some identities:
\begin{itemize}
\item Alternative dual versions:
\begin{align} \theta_S(G) &=  \min\{\max_v D_{v,v}: (D-J)\succeq 0, D|_{E(\bar G)}\le 0\}\label{eq:thetas_alt}\\
	      \theta_L(G) &=  \min\{\max_v D_{v,v}: (D-J)\succeq 0, D|_{E(\bar G)}= 0\}
\end{align}
	Passing between $C$ and $B$ is done as follows:
	$$ C=(\max_v D_{v,v})I - D+J \qquad D=J+\lambda_{\max}(C) I-C\,.$$
	It is in this form that $\theta_L(G_1\boxtimes G_2)$ has solution $D_1\otimes D_2$.
\item Another version is original ``umbrella'' formulation of Lov\'asz. Given $G$ let $H:\mreals^{|G|}\to\mreals^d$ be a
map such that: a) $\|H\delta_v\|=1$ all $v$; b) $v\not\sim v': (H \delta_v, H\delta_{v'})=0$. Define
$$ \tilde\theta(H) = \left(\max_{u} \min_{v} {(H\delta_v, Hu)^2\over \|Hu\|^2}\right)^{-1}\,.$$
The claim is 
	$$ \min_H \tilde\theta(H) = \theta_L(G)\,.$$
	One direction is easy: Take $D$ achieving $\theta_L$ (WLOG, this $D$ has constant diagonal equal to $\theta_L$).
	Then let $D_1 = D/\theta_L$ and let $D_1 = H^* H$. To select $u$ introduce inner-product (here I make assumption
	that $D$ is invertible, need continuity argument for the general case):
		$$ (a,b)_D \eqdef (D_1 a, b) $$
	and notice that $D_1^{-1} J$ is self-adjoint in this metric, thus has $D_1-$orthonormal eigenbasis $v_1,\ldots,v_n$
	satisfying
		$$ Jv_i = \lambda_i D_1 v_i\,,$$
	which implies that only $\lambda_1$ is non-zero and thus
		$$ \lambda_1 = \lambda_{\max}(D_1^{-1} J) = \min \{\lambda: D_1 \succeq \lambda^{-1}J\} = \theta_L^{-1} $$
	and also (wlog we can flip $v_1$ if $(1,v_1)<0$):
		$$ (1,v_1) = \sqrt{\lambda_1}, D_1 v_1 =  1/\sqrt{\lambda_1} $$
	Thus $(\delta_v, v_1)_D = {1\over\sqrt{\lambda_1}}$ for all $v$ and hence taking $u=v_1$ proves
	$$ \tilde\theta(H) \le \theta_L(G) $$

	For the other direction (this is by A. Megretski) notice that having $\{H,u\}$ with WLOG $\|Hu\|=1$ implies that
	$I \succeq Hu u' H'$ and thus
		$$ H'H\succeq H'H u u' H' H \eqdef p'p\,,$$
		where $(p_i)^2 \ge {1\over \tilde\theta(H)}$. Thus defining
		$$ D=\Delta H'H \Delta\,,$$
		where $\Delta=\diag\{1/p_i\}$ we get $D\ge 11'$ and $D_{v,v} \le \tilde\theta(H)$.

\item Szegedy $\theta$-function:
	\begin{align} \theta_z(G) &\eqdef \max \{\tr JM: \tr M=1, M\succeq 0, M|_{E(G)} \le 0\}\\
		&= \min\{\max_v D_{v,v}: (D-J)\succeq 0, D|_{E(\bar G)}= 0, D_{v,v'} \ge 0\} \\
   		&= \min\{\lambda_{\max}(C): C=C^T, C|_{E(G)^c} = 1, C_{v,v'}\le 1\}
	\end{align}
\item Fractional chromatic number:
	$$ \chi^*(G) \eqdef \min\{{c\over a}: G\to K_{c,a}\}\,.$$
\item Relations 1:
	$$ \bbalpha(G) \le \theta_S(G) \le \theta_L(G) \le \theta_z(G) \le \chi^*(\bar G) \le \chi(\bar G)$$
	The last two is proved by applying~\eqref{eq:cdp2xx},~\eqref{eq:knes_dat} to graph-hom $\bar G \to K_m,
	m=\chi(\bar G)$ or $\bar G\to K_{c,a}$.
\item Relations 2:
	\begin{align}
	\mbox{$G$-v.trans.:} \quad \bbalpha(G) \bbalpha(\bar G) &\le |V(G)| \\
		\chi^*(G) &\ge \bbalpha(\bar G) \\
	\mbox{$G$-v.trans.:} \quad \hskip 24pt\chi^*(G) &= {|V(G)|\over \bbalpha(G)} \\
	\theta_L(G) \theta_L(\bar G) &\ge |V(G)|\\
	 \mbox{$G$-v.trans.:} \quad \theta_L(G)\theta_L(\bar G) &= |V(G)| \\
		\theta_S(G) \theta_z(\bar G) &\ge |V(G)| \label{eq:ts_lb1} \\
	 \mbox{$G$-v.trans.:} \quad \theta_S(G) \theta_z(\bar G) &= |V(G)|\\
	\end{align}	   
		Proof: The first is by averaging over automorphism $\gamma$ of $|\gamma(I)\cap K| \le 1$ for and indep.
		set $I$ and clique $K$. The following two pairs are proved similarly. Take $D$ achieving $\theta_z(\bar
		G)$ then $M=aD$ is a
		a feasible assignment in the (primal) for $\theta_S(G)$ with $a={1\over |G| (\max_v D_{v,v})}$. This
		proves inequality~\eqref{eq:ts_lb1}.
		Conversely, if $G$ is vertex transitive, then WLOG optimal $M$ achieving $\theta_S(G)$ has $1$ as an eigenvector and has a constant
		diagonal. Then $D={|G|^2 \over \tr JM} M$ is a feasible in the dual for $\theta_z(\bar G)$ and thus
		$\theta_S(G)\theta_z(\bar G) \le |G|$.
\item	A different kind:
	\begin{align} 
	 \mbox{$G$-v.trans.:}\quad  \theta_S(G) \theta_L(\bar G) &\le |V(G)| \label{eq:ts_ub2}
	\end{align} 
		Proof:\footnote{Inequality~\eqref{eq:ts_ub2} also holds more generally for Delsarte's assoc. schemes but
		different proof is needed.} 
			$|V(G)| = \theta_L(G) \theta_L(\bar G) \ge \theta_S(G) \theta_L(\bar G) $
\item Regular (constant degree) graphs:
	\begin{align} 
		\mbox{$G$-regular:}\quad \theta_L(G) \le \theta_z(G) &\le -|V(G)| {\lambda_n\over \lambda_1 - \lambda_n}\\
	   	\mbox{... and $G$-edge trans.:}\quad\theta_L(G) =\theta_z(G) &= -|V(G)| {\lambda_n\over \lambda_1 - \lambda_n}\,,
	\end{align}
	where $\lambda_1 \ge \cdots \ge \lambda_n$ are eigenvalues of $A_G$, $\lambda_1 = \mbox{degree~}G$.
		Proof: Just take $C=J-aA_G$ in~\eqref{eq:tl_min}; the best $a={|G|\over \lambda_1 - \lambda_n}$. By symmetry this is tight in the edge-transitive case.
\item Some info about the pentagon:
	\begin{align} \bbalpha(C_5)&=\bbalpha(\bar C_5) = 2\\
	   \theta_S(C_5) &= \theta_S(\bar C_5) = \theta_L(C_5) = \theta_L(\bar C_5) = \sqrt{5}\,.
	\end{align}
\item Relation between $\chi(G)$ and $\bbalpha(G)$: For vertex-transitive graph $G$\footnote{For general $G$ replace $|G|\over
\bbalpha(G)$ with $\sup{|G'|\over \bbalpha(G')}$ sup over all induced subgraphs.}
\begin{equation}\label{eq:alphachi}
	G\mbox{-v.t.:~~} {|G|\over \bbalpha(G)} \le \chi(G) \le {|G| \ln |G|\over \bbalpha(G)}\,.
\end{equation}
One direction is by no-homomorphism lemma applied to $G\to K_{\chi}$. The other direction is by covering $|G|$ with $m$
randomly chosen maximal independent sets (by transitivity each point is covered with the same probability, then taking
$m={|G|\ln |G|\over \bbalpha(G)}$ is sufficient by union bound).
\end{itemize}
}

The graph homomorphism $f: X\to Y$ is a map of vertices of $X$ to vertices of $Y$ such that endpoints of each edge of
$X$ map to the endpoints of some edge in $Y$. If there exists any graph homomorphism between $X$ and $Y$ we will write
$X\to Y$. The problem of finding $f:X\to Y$ is known as $Y$-coloring problem.

For establishing properties of graph homomorphisms it is convenient to introduce \textit{homomorphic
product}~\cite{BM95}\footnote{Note that~\cite{BM95} instead defines hom-product $X\circ Y$ which corresponds to $\overline{X\ltimes Y}$.}: graph
$X\ltimes Y$ is a simple graph with vertices $V(X)\times V(Y)$ and $(x_1,y_1)\sim (x_2,y_2)$ if $x_1=x_2$ or $x_1 \sim
x_2, y_1\not\sim y_2$.
From~\eqref{eq:lova_0} and definition of $X\ltimes Y$ we have:
$$ \bbalpha(X\ltimes Y) \le \theta_S(X\ltimes Y) \le \theta_L(X\ltimes Y) \le |V(X)| $$
and
$$ \bbalpha(X\ltimes Y)=|V(X)| \quad \iff \quad  X\to Y\,.$$

We overview some of the well-known tools for proving $X\not \to Y$:
\begin{itemize}
\item (No-Homomorphism Lemma~\cite{AC85}) If $X\to Y$ and $Y$ is vertex transitive then
	\begin{equation}\label{eq:nohom}
		{\bbalpha(X)\over |V(X)|} \ge {\bbalpha(Y)\over |V(Y)|}\,. 
	\end{equation}	
\item (Monotonicity of $\bar\bbalpha$) If $X\to Y$ then
	\begin{equation}\label{eq:cdp0}
		\bbalpha(\bar X) \le \bbalpha(\bar Y) 
	\end{equation}	
\item (Monotonicity of $\bar\theta$) If $X\to Y$ then\apxonly{\footnote{Easy proof: If $f:X\to Y$ and $D_{y,y'}$ is a
feasible dual for $\theta_S(\bar Y)$ in~\eqref{eq:thetas_alt} then $D'_{x,x'} = D_{f(x), f(x')}$ is a feasible dual for
$\theta_S(\bar X)$. Note that this mapping maps $J$ to $J$ so $D'\succeq J$ is obvious.}}
	\begin{align} \theta_L(\bar X) &\le \theta_L(\bar Y) \label{eq:cdp1}\\
	   \theta_S(\bar X) &\le \theta_S(\bar Y)\,.  \label{eq:cdp2}
	\end{align}	   
\apxonly{\begin{equation}
	   \theta_z(\bar X) \le \theta_z(\bar Y)\,.  \label{eq:cdp2xx}\end{equation}}
\item (Homomorphic product) If $X\to Y$ then
	\begin{align} 
		\theta_S(X \ltimes Y) &= |V(X)|  \label{eq:cdp3} \\
		   \theta_L(X \ltimes Y) &= |V(X)|\,. \label{eq:cdp4}
	\end{align}		   
\end{itemize}
Note that~\eqref{eq:cdp1}-\eqref{eq:cdp4} give necessary conditions for $X\to Y$. Although, generally not tight, these
conditions can be understood as elegant relaxations (semi-definite, fractional, quantum etc) of the graph homomorphism
problem, cf.~\cite{FL92,BM95,DR13,RM12,CMR14}.

\apxonly{\begin{remark} Here is a fuller story as per~\cite{FL92,CMR14}:
	$$ \begin{array}{rcrcrcr}
		X \to Y & \follows & X \stackrel{q}{\to} Y & \iff & \theta_S(X\ltimes Y) = |V(X)| 
				& \follows & \theta_S(\bar X) \le \theta_S(\bar Y)\\
			& & & & \Downarrow \hskip 40pt \\
			& & X \stackrel{B}{\to} Y & \iff &\theta_L(X\ltimes Y) = |V(X)| & \stackrel{\mbox{\cite{FL92}}}{\iff} & 
				\theta_L(\bar X) \le \theta_L(\bar Y) 
		\end{array} $$ 
\end{remark}}

Inequalities~\eqref{eq:nohom}-\eqref{eq:cdp4} are useful for showing $X\not \to Y$. If $X\not \to Y$ it is natural to
ask for a quantity measuring to what extent $X$ fails to homomorphically map into $Y$. One such quantity is
$\bbalpha(X\ltimes Y)$, since
\begin{equation}\label{eq:alpha_int}
	\bbalpha(X\ltimes Y) = \max\{|V(G)|: G\mbox{-- induced subgraph of $X$ s.t.~} G\to Y\}\,.
\end{equation}
Indeed, by construction any independent set $S$ in $X\ltimes Y$ has at most one point in each fiber $\{x_0\} \times Y$ and
thus projection $V(G)\eqdef\mproj_1(S)$ onto $X$ always yields an induced subgraph $G\subset X$ satisfying $G\to Y$.
With~\eqref{eq:alpha_int} in mind, the next set of results will allow us to assess the degree of failure of $X\not \to Y$.

\begin{theorem} 
If $X$ is vertex transitive, then
\begin{align}
	\bbalpha(X\ltimes Y) &\le |V(X)| {\bbalpha(\bar Y) \over \bbalpha(\bar X)} \label{eq:alpha_b1}\\
	\theta_S(X \ltimes Y) &\le |V(X)| {\theta_S(\bar Y)\over \theta_S(\bar X)} \label{eq:thetas_b1}\\
	\theta_L(X \ltimes Y) &\le |V(X)| {\theta_L(\bar Y)\over \theta_L(\bar X)} = \theta_L(X) \theta_L(\bar Y)\,. 
		\label{eq:thetal_b1}
\end{align}
	If $Y$ is vertex transitive, then
\begin{align} 
	\bbalpha(X\ltimes Y) &\le |V(Y)| {\bbalpha(X)\over \bbalpha(Y)} \label{eq:alpha_b2}\\
	\theta_S(X \ltimes Y) &\le |V(Y)| {\theta_S(X)\over \theta_S(Y)} \label{eq:thetas_b2}\\
	\theta_L(X \ltimes Y) &\le |V(Y)| {\theta_L(X)\over \theta_L(Y)} = \theta_L(X) \theta_L(\bar Y)\,.
		\label{eq:thetal_b2}
\end{align}	
\end{theorem}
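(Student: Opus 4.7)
My plan rests on a single structural observation, after which all six inequalities fall out from ingredients already assembled in the section. I would first argue that $X\boxtimes \bar Y$ is a spanning subgraph of $X\ltimes Y$, i.e.\ $E(X\boxtimes \bar Y)\subseteq E(X\ltimes Y)$. To verify this I would use the fact that, because $Y$ is simple, the condition ``$y_1=y_2$ or $y_1\sim_{\bar Y}y_2$'' collapses to the single condition $y_1\not\sim_Y y_2$; a short case split on whether $x_1=x_2$ or $x_1\sim_X x_2$ then matches each edge of $X\boxtimes\bar Y$ with one of the two disjuncts in the definition of $X\ltimes Y$. Since $\bbalpha,\theta_S,\theta_L$ are all monotone non-increasing under edge addition, this containment immediately upper bounds $\bbalpha(X\ltimes Y)$, $\theta_S(X\ltimes Y)$ and $\theta_L(X\ltimes Y)$ by the corresponding invariants of $X\boxtimes\bar Y$.

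From here the three theta bounds are almost free. For $\theta_L$ I would invoke Lov\'asz multiplicativity \eqref{eq:lova_1} to write $\theta_L(X\boxtimes \bar Y)=\theta_L(X)\theta_L(\bar Y)$ (no hypothesis needed), and then apply reciprocity \eqref{eq:lova_2} to whichever of $X$ or $Y$ is vertex-transitive in order to rewrite $\theta_L(X)=|V(X)|/\theta_L(\bar X)$ or $\theta_L(\bar Y)=|V(Y)|/\theta_L(Y)$, matching \eqref{eq:thetal_b1}--\eqref{eq:thetal_b2}. For $\theta_S$ I would plug into Lemma~\ref{th:sprod}: taking $G=X, H=\bar Y$ in \eqref{eq:sprod} when $X$ is vertex-transitive yields \eqref{eq:thetas_b1}; since $\bar Y$ inherits vertex-transitivity from $Y$, taking $G=\bar Y, H=X$ yields \eqref{eq:thetas_b2}.

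For the $\bbalpha$ inequalities the subgraph route no longer has a named lemma to finish the job, so I would instead run a direct No-Homomorphism argument in the style of \eqref{eq:nohom}. Given an independent set $S\subset V(X\ltimes Y)$, the first clause of the edge rule (``$x_1=x_2$'') forces $|S|=|T|$ with $T\subset V(X)$ the $X$-projection of $S$, and the second clause makes the induced assignment $T\to V(Y)$, $x\mapsto y_x$, a graph homomorphism of the subgraph of $X$ induced on $T$ into $Y$. For any clique $I\subset V(X)$ and any $\gamma\in\mathrm{Aut}(X)$ I would then deduce $|\gamma(T)\cap I|\le \omega(Y)=\bbalpha(\bar Y)$, since cliques map to cliques under a homomorphism. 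Averaging over $\gamma$ when $X$ is vertex-transitive, with $I$ a maximum clique of $X$ (of size $\bbalpha(\bar X)$), produces \eqref{eq:alpha_b1}. For \eqref{eq:alpha_b2} I would swap roles: the branch of the edge rule with $y_1\not\sim_Y y_2$ guarantees that for any independent set $I_0$ of $Y$ the set $\bigcup_{y\in I_0}\{x:(x,y)\in S\}$ is independent in $X$; averaging over $\mathrm{Aut}(Y)$ with $|I_0|=\bbalpha(Y)$ finishes the proof. The only non-mechanical step is spotting the spanning-subgraph relation $X\boxtimes\bar Y\subseteq X\ltimes Y$; once it is in hand the six inequalities are immediate consequences of the tools already on the page.
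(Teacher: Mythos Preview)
Your argument is correct and essentially coincides with the paper's. The key observation that $X\boxtimes\bar Y$ is a spanning subgraph of $X\ltimes Y$, followed by monotonicity and Lemma~\ref{th:sprod} for $\theta_S$ and by~\eqref{eq:lova_1}--\eqref{eq:lova_2} for $\theta_L$, is exactly what the paper does.

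The only difference is in the $\bbalpha$ bounds. The paper also handles these via the subgraph reduction to $X\boxtimes\bar Y$ and then invokes the $\bbalpha$-analogue of Lemma~\ref{th:sprod} (itself obtained from Proposition~\ref{th:elbas_prod} by specializing $G'$ to a maximum clique). Your direct No-Homomorphism style averaging (over $\mathrm{Aut}(X)$ intersecting with a maximum clique for~\eqref{eq:alpha_b1}, and over $\mathrm{Aut}(Y)$ intersecting with a maximum independent set for~\eqref{eq:alpha_b2}) is the same averaging computation carried out inside $X\ltimes Y$ rather than inside the strong product, so it is a slightly more self-contained variant of the same idea rather than a genuinely different route.
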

\begin{remark} One may view~\eqref{eq:alpha_b1}-\eqref{eq:thetal_b1}
as a quantitative version of criteria~\eqref{eq:cdp0}-\eqref{eq:cdp2} and~\eqref{eq:alpha_b2}-\eqref{eq:thetal_b2} as
a quantitative version of no-homomorphism lemma~\eqref{eq:nohom}. 
Note also that the right-most versions of~\eqref{eq:thetal_b1} and~\eqref{eq:thetal_b2} hold without any transitivity
assumptions~\cite[Theorem 17]{BM95}: For any $X,Y$ 
	\begin{equation}\label{eq:thetal_b1a}
		\theta_L(X \ltimes Y) \le \theta_L(X) \theta_L(\bar Y)\,.
	\end{equation}
\end{remark}

\begin{proof} The proof relies on the following simple observation:
The strong product $X \boxtimes \bar Y$ 
	of $X$ and $\bar Y$ -- is a subgraph of $X \ltimes Y$. Thus by edge-monotonicity:
	$$ \bbalpha, \theta_S, \theta_L(X\ltimes Y) \le \bbalpha,\theta_S,\theta_L(X\boxtimes \bar Y)\,.$$
\apxonly{Note: for convenience here are edges of $X\ltimes Y$ and $X\boxtimes \bar Y$:
\begin{align} X\ltimes Y: & (x,y)\sim (x',y'): \,\, (x\sim x', y\not\sim y')\mbox{~or~}(x=x',y\neq y')\\
   X\boxtimes \bar Y: & (x,y)\sim (x',y'): \,\,
   			(x\sim x', y\not\sim y')\mbox{~or~}(x=x',y\neq y',\alert{y\not\sim y'}) 
\end{align}			
}%

From here the results on $\bbalpha$ and $\theta_S$ follow from Lemma~\ref{th:sprod} with $G=X$, $H=\bar Y$
(for~\eqref{eq:alpha_b1} and \eqref{eq:thetas_b1}) or $G=\bar Y$ and $H=X$ (for~\eqref{eq:alpha_b2}
and~\eqref{eq:thetas_b2}).
For $\theta_L$ the equality parts of~\eqref{eq:thetal_b1} and~\eqref{eq:thetal_b2} follow from 
	the results of Lov\'asz~\eqref{eq:lova_1} and~\eqref{eq:lova_2}.
\qed
\end{proof}

One of the classically useful methods in coding theory is the Elias-Bassalygo reduction: From a given code in $\FF_2^n$
one selects a large subcode sitting on a Hamming sphere of a given radius. One then bounds minimum distance (or other) 
parameters for the packing problem in the Johnson graph $J(n,d,w)$. It so happens that taking a simple dual certificate for 
$\theta_S(J(n,d,w))$ and transporting the bound back to the full space results in excellent bounds, which are hard (but
possible -- see Rodemich theorem in~\cite[p. 27]{delsarte1994application}) to obtain by direct SDP methods in the full space.  Succinctly, we may summarize this as follows: If $G'$ is
an induced subgraph of a vertex transitive $G$ then
$$ \bbalpha(G), \theta_S(G), \theta_L (G) \le {|V(G)|\over |V(G')|} \bbalpha(G'), \theta_S(G'),\theta_L(G')
\quad\mbox{resp.} $$

Here is a version of the similar method for the graph-homomorphism problem and for the problem of finding independent
sets in $G\boxtimes H$:
\begin{proposition}\label{th:elbas_prod} Let $G$ be a vertex transitive graph and $G'$ its induced subgraph. Then
\begin{align} 
	\bbalpha(G\boxtimes H) &\le {|V(G)|\over |V(G')|} \bbalpha(G'\boxtimes H) \label{eq:elbas_alpha} \\
	\theta_S(G\boxtimes H) &\le {|V(G)|\over |V(G')|} \theta_S(G'\boxtimes H) \label{eq:elbas_theta} 
\end{align}
and same for $\theta_L$.
\end{proposition}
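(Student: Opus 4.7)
The plan is to mimic the classical no-homomorphism lemma argument for~\eqref{eq:elbas_alpha} and to extend it to the SDP relaxations $\theta_S,\theta_L$ via Jensen's inequality applied to the quadratic form $x\mapsto x^T M x$ for $M\succeq 0$.

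For~\eqref{eq:elbas_alpha}, I would take a maximum independent set $S$ in $G\boxtimes H$. For any $\gamma\in\mathrm{Aut}(G)$ (acting on the first coordinate only), the set $S\cap(\gamma V(G')\times V(H))$ is independent in the induced subgraph on $\gamma V(G')\times V(H)$, which is isomorphic to $G'\boxtimes H$; hence it has size at most $\bbalpha(G'\boxtimes H)$. Averaging this inequality over uniform $\gamma$ and invoking vertex-transitivity of $G$ (so that every $g\in V(G)$ falls into $\gamma V(G')$ with probability $|V(G')|/|V(G)|$), a standard double-counting produces $|S|\cdot|V(G')|/|V(G)|\le\bbalpha(G'\boxtimes H)$, which is~\eqref{eq:elbas_alpha}.

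For~\eqref{eq:elbas_theta}, the plan is to let $M$ achieve $\theta_S(G\boxtimes H)$ in the primal~\eqref{eq:ts_max}. For each $\gamma$, the principal submatrix $M_\gamma$ of $M$ with rows and columns indexed by $\gamma V(G')\times V(H)$ is PSD, entrywise non-negative, and (since $G'$ is induced) vanishes on the edges of the induced subgraph $\cong G'\boxtimes H$; hence after normalising by $\tr M_\gamma$ it is primal-feasible for $\theta_S(G'\boxtimes H)$. This gives $\tr JM_\gamma\le\theta_S(G'\boxtimes H)\cdot\tr M_\gamma$ for every $\gamma$. Taking expectations over uniform $\gamma\in\mathrm{Aut}(G)$, the same double-counting as above yields $\EE_\gamma\tr M_\gamma=|V(G')|/|V(G)|$, and hence $\EE_\gamma\tr JM_\gamma\le\theta_S(G'\boxtimes H)\cdot|V(G')|/|V(G)|$.

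The main step -- and the reason naive double-counting alone cannot close the argument -- is producing a matching lower bound on $\EE_\gamma\tr JM_\gamma$. I would write $\tr JM_\gamma=x_\gamma^T M x_\gamma$, where $x_\gamma\in\{0,1\}^{V(G)\times V(H)}$ is the indicator of $\gamma V(G')\times V(H)$. Since $M\succeq 0$, the map $x\mapsto x^T Mx$ is convex, and Jensen's inequality gives $\EE_\gamma[x_\gamma^T Mx_\gamma]\ge(\EE_\gamma x_\gamma)^T M(\EE_\gamma x_\gamma)$. Vertex-transitivity of $G$ forces $\EE_\gamma x_\gamma=(|V(G')|/|V(G)|)\mathbf{1}$, so the right-hand side equals $(|V(G')|/|V(G)|)^2\tr JM=(|V(G')|/|V(G)|)^2\theta_S(G\boxtimes H)$. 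Combining with the averaged upper bound and cancelling one factor of $|V(G')|/|V(G)|$ produces~\eqref{eq:elbas_theta}. The identical argument, with the entrywise non-negativity condition dropped throughout, handles $\theta_L$; the conceptual obstacle is precisely the control of the off-diagonal mass $\tr JM_\gamma$, which the PSD property of $M$ resolves through convexity.
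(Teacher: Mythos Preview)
Your proof is correct and follows essentially the same strategy as the paper: both restrict the optimal primal $M$ to $V(G')\times V(H)$, verify feasibility for $\theta_S(G'\boxtimes H)$, and then lower-bound the resulting quadratic form $\langle M\,1_{G'}\otimes 1_H,\,1_{G'}\otimes 1_H\rangle$ by $(|V(G')|/|V(G)|)^2\,\tr JM$ using $M\succeq 0$. The only cosmetic difference is packaging: the paper first symmetrises $M$ over $\mathrm{Aut}(G)$ and then uses an eigenspace/orthogonality argument on a single copy of $G'$, whereas you keep $M$ fixed, average over all translates $\gamma V(G')$, and invoke Jensen's inequality for the convex map $x\mapsto x^T M x$ --- the two computations are equivalent.
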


\begin{proof} Let $\Gamma$ be the group of automorphisms of $G$. The action of $\Gamma$ naturally extends to the action
on $G\boxtimes H$ via:
$$ \gamma(g,h) \eqdef (\gamma(g), h)\,.$$
Let $S$ be the maximal independent set of $G\boxtimes H$. Consider the chain:
\begin{align} \bbalpha(G' \boxtimes H) &\ge {1\over |\Gamma|} \sum_{\gamma \in\Gamma} |\gamma(S) \cap G' \boxtimes H| 
		\label{eq:eb1} \\
	&= {1\over |\Gamma|} \sum_{\gamma \in\Gamma, g,g',h} 1\{\gamma(g)=g'\} 1\{(g',h)\in S\} 1\{g\in
	G'\}\label{eq:eb2}\\
	&= {|S| \, |V(G')| \over |V(G)|} \label{eq:eb3}\,,
\end{align}
where~\eqref{eq:eb1} follows since each $\gamma(S) \cap G' \boxtimes H$ is an independent set of $G'\boxtimes
H$,~\eqref{eq:eb2} is obvious,
and~\eqref{eq:eb3} is because by the transitivity of the action of $\Gamma$: $\sum_\gamma 1\{\gamma(g)=g'\} =
{|\Gamma|\over V(G)}$. Clearly, \eqref{eq:eb3} is equivalent to~\eqref{eq:elbas_alpha}.

For~\eqref{eq:elbas_theta} let $M=(M_{g_1h_1,g_2h_2}, g_1,g_2\in G, h_1,h_2 \in H)$ be the maximizer
in~\eqref{eq:ts_max}. Symmetrizing over $\Gamma$ if necessary we may assume that
\begin{align}\label{eq:ebt_1}
	M_{g h_1, g h_2} &= M_{g'h_1, g' h_2}  & \forall g,g'\in G, h_1,h_2 \in H\\
	M_{g_1 h_1, g_2 h_2} &= M_{\gamma(g_1)h_1, \gamma(g_2) h_2} & \forall g_1,g_2\in G, h_1,h_2 \in H, \forall
	\gamma \in \Gamma
\end{align}
Last equation also implies that the subspace spanned by vectors $1_G\otimes (\cdot)$ is an eigenspace of $M$. 
Here and below $1_G, 1_H$
are all-one vectors of dimensions $|V(G)|$ and $|V(H)|$ respectively. And $1_{G'}$ is a zero/one vector of dimension
$|V(G)|$ having ones in coordinates corresponding to vertices in $G'$. 

Set
$$ \tilde M_{g_1 h_1, g_2 h_2} = {|V(G)|\over |V(G')|} M_{g_1 h_1, g_2 h_2} \quad \forall g_1,g_2\in G', h_1,h_2\in
H\,.$$
One easily verifies that $\tilde M$ is a feasible choice for the primal program~\eqref{eq:ts_max} for
$\theta_S(G'\boxtimes H)$. To compute $\tr J\tilde M$ we notice that
\begin{equation}\label{eq:ebt_2}
	\tr J\tilde M = {|V(G)|\over |V(G')|} \left( M 1_{G'} \otimes 1_H, 1_{G'} \otimes 1_H\right)\,,
\end{equation}
where $(\cdot,\cdot)$ is a standard inner product on $\mreals^{|V(G)|} \otimes \mreals^{|V(H)|}$. Finally, observe that
orthogonal decomposition
$$ 1_{G'} \otimes 1_H = c 1_G \otimes 1_H + (1_{G'} - c1_G)\otimes 1_H\,, \qquad c= {|V(G')|\over |V(G)|} $$
remains orthogonal after application of $M$, cf.~\eqref{eq:ebt_1}. Therefore, we get by positivity $M\succeq 0$ that
$$ \left( M 1_{G'} \otimes 1_H, 1_{G'} \otimes 1_H\right) \ge c^2 (M 1_G \otimes 1_H, 1_G \otimes 1_H) = c^2 \tr JM\,,$$
which together with~\eqref{eq:ebt_2} completes the proof of~\eqref{eq:elbas_theta}.
\qed
\end{proof}

\begin{corollary} \label{th:elbas}
Let $X'$ and $Y'$ be induced subgraphs of $X$ and $Y$, respectively. If $X$ is vertex transitive then
$$ \bbalpha(X \ltimes Y) \le {|V(X)|\over |V(X')|} \bbalpha(X'\ltimes Y)\,.$$
If $Y$ is vertex transitive then
$$ \bbalpha(X\ltimes Y) \le {|V(Y)| \over |V(Y')|} \bbalpha(X\ltimes Y')\,.$$
\end{corollary}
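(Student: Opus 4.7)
The plan is to mimic the averaging argument from Proposition~\ref{th:elbas_prod}, but applied directly to the homomorphic product $X\ltimes Y$ rather than to a strong product. The key (and only nontrivial) point is that the automorphism group $\Gamma$ of $X$ lifts to an automorphism group of $X\ltimes Y$ via the natural action $\gamma(x,y)\eqdef(\gamma(x),y)$. One checks this by inspecting the two edge types of $X\ltimes Y$: $(x,y)\sim(x,y')$ with $y\neq y'$, and $(x,y)\sim(x',y')$ with $x\sim x'$ in $X$ and $y\not\sim y'$ in $Y$. Both are preserved under the lifted action because $\gamma$ preserves equality, adjacency and non-adjacency in $X$.

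Once this is in place, let $S$ be a maximum independent set in $X\ltimes Y$. For every $\gamma\in\Gamma$ the image $\gamma(S)$ is again independent in $X\ltimes Y$, and hence $\gamma(S)\cap V(X'\ltimes Y)$ is independent in the induced subgraph $X'\ltimes Y$. In particular $|\gamma(S)\cap V(X'\ltimes Y)|\le\bbalpha(X'\ltimes Y)$. Averaging this inequality over $\gamma\in\Gamma$ and using transitivity of the $\Gamma$-action on $V(X)$, so that $\sum_\gamma 1\{\gamma(x)=x'\} = |\Gamma|/|V(X)|$ for every $x'\in V(X)$, reproduces the chain~\eqref{eq:eb1}--\eqref{eq:eb3} with $G\boxtimes H$ replaced throughout by $X\ltimes Y$. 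This directly yields $\bbalpha(X\ltimes Y)\le {|V(X)|\over|V(X')|}\bbalpha(X'\ltimes Y)$.

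The second statement is obtained in exactly the same way, now using the automorphism group of $Y$ acting on the second coordinate via $\eta(x,y)\eqdef(x,\eta(y))$. This lifted action preserves $X\ltimes Y$ because the definition of the homomorphic product depends on $Y$ only through its adjacency and non-adjacency relations, both of which $\eta$ preserves; the averaging argument then proceeds verbatim with the roles of the two factors swapped.

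There is no real obstacle here. The entire content of the corollary is the verification that automorphisms of $X$ (respectively $Y$) lift to automorphisms of $X\ltimes Y$. Once this compatibility is observed, the averaging mechanism from the proof of Proposition~\ref{th:elbas_prod} transfers without change and yields both displayed inequalities.
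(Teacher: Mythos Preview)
Your proposal is correct and is precisely the argument the paper has in mind: the corollary is stated without explicit proof because the averaging computation~\eqref{eq:eb1}--\eqref{eq:eb3} carries over verbatim once one observes that the coordinatewise action of $\mathrm{Aut}(X)$ (resp.\ $\mathrm{Aut}(Y)$) preserves the edge set of $X\ltimes Y$, and that $X'\ltimes Y$ (resp.\ $X\ltimes Y'$) is the induced subgraph on $V(X')\times V(Y)$ (resp.\ $V(X)\times V(Y')$). The paper's own remark following the corollary, interpreting the second bound as ``take $X''=(\gamma\circ f)^{-1}Y'$ for a good $\gamma$'', confirms this is the intended mechanism.
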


\apxonly{\textbf{Remark:} For the case when $f:X\to Y$ exists, the proof says that (obviously) there is a restriction
$f':X'\to Y$ for any $X' \subset X$. The second statement is more interesting. It says that for any $Y'$ there is an
$X''$ of size $|X''| \ge {|Y'|\over |Y|}$ s.t. there is an $f'': X''\to Y'$, and the $X''$ can be taken to be $(\gamma
\circ f)^{-1} Y'$}

\begin{proof}[Lemma~\ref{th:sprod}]
\apxonly{Inequality~\eqref{eq:sproda} follows from Proposition~\ref{th:elbas_prod} by taking $G'$ to be the
maximal clique in $G$. Then $\bbalpha(G' \boxtimes H) = \bbalpha(H)$ and~\eqref{eq:sproda} follows.}
 We will give an explicit proof by exhibiting a choice of $\tilde C$ in~\eqref{eq:ts_min} for computing
$\theta_S(G\boxtimes H)$. 

Let $M$ be the optimal (primal) solution of~\eqref{eq:ts_max} for $\theta_S(\bar G)$ and let $C$ be the
optimal (dual) solution of~\eqref{eq:ts_min} for $\theta_S(H)$. We know:
$$ \tr JM = \theta_S(\bar G), \lambda_{\max}(C) =\theta_S(H)\,.$$
   and also from the vertex-transitivity of $G$ without loss of generality we may assume that 
   $$ M_{g,g} = {1\over |V(G)|}, M1 = {\tr JM\over |V(G)|} 1 \,,$$
   where $1$ is an all-one vector. We now define\footnote{This choice may appear mysterious, but notice that if we
   define $D=C-\lambda_{\max}(C)I-J$ and assuming $\lambda_{\max}(\hat C) = c_1$ we could write~\eqref{eq:ts_q3} as 
   $\hat D = c_2 M \otimes D$, which is more natural.}
   \begin{equation}\label{eq:ts_q3}
   	\hat C \eqdef c_1 I + c_2 M \otimes(C-\lambda_{\max}(C)I - J) + J\,,
\end{equation}   
   where as before $J$ denotes the square matrix of all ones (of different dimension depending on context) and
   \begin{equation}\label{eq:ts_q2a}
   	c_1 \eqdef {\lambda_{\max}(C) |V(G)|\over \tr JM}, \quad c_2 = {|V(G)|^2\over \tr JM}\,. 
\end{equation}   
   We will prove that $\hat C$ is a feasible choice in the (dual) problem~\eqref{eq:ts_min} for $\theta_S(G\boxtimes H)$.
   Then we can conclude that since $M\succeq 0$ and $C-\lambda_{\max}(C) I \preceq 0$ that
   \begin{align}\label{eq:ts_q1}
   	\hat C \preceq c_1 I - c_2 M \otimes J + J = c_1 I - (c_2 M - J) \otimes J \preceq c_1 I 
\end{align}   
   since by construction $c_2 M - J\succeq 0$ (recall that $M$ and $J$ commute). Thus,
   $$ \lambda_{\max}(C) \le c_1 = {|V(G)| \theta_S(H) \over \theta_S(\bar G)}$$
   proving~\eqref{eq:thetas_b1}. To verify that $\hat C$ is feasible dual assignment, we need to show
   \begin{equation}\label{eq:ts_q2}
   	\hat C_{gh, g'h'} \ge 1 \quad \forall g,h,g',h': 
   		\begin{cases} g=g',h=h', &\mbox{or~}\\
		g\neq g', g\not\sim g', &\mbox{or~}\\
		h\neq h', h\not\sim h'\,, \end{cases}
\end{equation}		
   which follows since $E(G\boxtimes H)^c$ consists of all self-loops and edges connecting pairs that are non-adjacent
   (and non-identical) in either $G$ or $H$-coordinate.

   To verify~\eqref{eq:ts_q2} we recall that $M$ and $C$ satisfy
\begin{align} 
   M_{g,g'}&\ge 0 \quad \forall g,g' \label{eq:ts_q0a}\\
	M_{g,g'} &= 0 \quad \forall g\not\sim g'\mbox{~and~} g\neq g' \label{eq:ts_q0aa}\\
   C_{h,h'} &\ge 1 \quad \forall h\not\sim h'  \label{eq:ts_q0b}
\end{align}
   Then verification proceeds in a straightforward manner. For example, in the first case in~\eqref{eq:ts_q2} we have
   \begin{align*} 
   	\hat C_{gh,gh} &= c_1 + {c_2\over |V(G)|}(C_{h,h} - \lambda_{\max}(C) - 1) + 1 \\
		&\ge c_1 - {c_2 \lambda_{\max}(C)\over |V(G)|} + 1 = 1 
\end{align*}   
   because $C_{h,h}\ge 1$ and by~\eqref{eq:ts_q2a}. The two remaining cases are checked similarly. 

\apxonly{Alternatively, for~\eqref{eq:thetas_b1} we could take any
$\Gamma$ from the feasible set of the primal~\eqref{eq:ts_max} for $\theta_S(X\ltimes Y)$, take $\bar M$ to be the optimal in
the primal for $\theta_S(\bar X)$ and show that 
$$ N_{y,y'} = c \sum_{x,x'} \Gamma_{xy, x'y'} \bar M_{x,x'} $$
is a feasible for $\theta_S(\bar Y)$ with $\tr JN \ge \cdots$. }

	\qed
\end{proof}

\apxonly{
\subsection{Random comments about graph homomorphisms}
\begin{itemize}
\item Graph hom. $X\to Y$ does not mean $X$ is a subgraph of $Y$. For example, $\square \to K_2$ but $\square \not
\hookrightarrow K_2$. However, $\nabla
\to Y \iff \nabla \hookrightarrow Y$ (and also for any clique).
\end{itemize}
}

\section{Proofs of main results}\label{sec:hamming}

Before going into details of the proofs, we make a clarifying remark. Our main goal is to improve the simple
bound~\eqref{eq:ccb}, which (we remind) was obtained by noticing that independent sets of $H(k,\alpha k)$ transform
under $(\alpha,\beta)$-maps into independent sets of $H(n,\beta n)$. The improvement comes by noticing that
$(\alpha,\beta)$-maps also transform any matrix $M$ in~\eqref{eq:ts_max} feasible for $H(k,\alpha k)$ into a matrix $M'$
feasible for $H(n,\beta n)$. Good feasible matrices for $H(k,\alpha k)$ were found previously
in~\cite{AS01,navon2005delsarte}. 
We proceed to formal details.

\begin{proof}[Theorems~\ref{th:main1} and~\ref{th:main2}]
Clearly, it is sufficient to prove Theorem~\ref{th:main2}. We quote the following results of Kleitman~\cite{DK66},
McEliece et al~\cite{MRRW77} and the joint lower bound of Samorodnitsky~\cite{AS01} and 
Navon-Samorodnitsky~\cite{navon2005delsarte}:
\begin{align} {1\over m} \log \bbalpha(\bar H(n, \lambda m)) &= h(\lambda/2) + \delta_m(\lambda)\label{eq:tkl}\\
	{1\over m} \log \theta_S(H(m, \lambda m)) &\le R_{LP2}(\lambda) + \delta_m(\lambda)\label{eq:tmrrw}\\
	{1\over m} \log \theta_S(H(m, \lambda m)) &\ge R_{Sam}(\lambda) - \delta_m(\lambda)\label{eq:tsam}\,,
\end{align}
where the remainder term $\delta_m(\lambda)\to0$ uniformly on compacts in $\lambda \in (0,1]$. 

Take $\alpha,\beta \ge
\epsilon$ and $k,n \in \mathbb{Z}_+$. Define $\delta_m = \sup_{\lambda \in[\epsilon,1]}\delta_m(\lambda)$. Assume
that~\eqref{eq:tm2_1} holds. Consider arbitrary $f:\FF_2^k\to\FF_2^n$. Notice that if $S$ is a set which does not
contain any pair satisfying~\eqref{eq:tm2_2a}, then the set
$$ \{(x,y): x\in S, y=f(x)\} $$
is an independent set of $\bar H(k, \alpha k) \ltimes \bar H(n, \beta n)$. (This is also clear
from~\eqref{eq:alpha_int} as $f$ defines a homomorphism $S\to \bar H(n,\beta n)$ if $S$ is viewed as induced subgraph of
$\bar H(k, \alpha k)$.) Thus it is sufficient to show
$$ \bbalpha(\bar H(k, \alpha k) \ltimes \bar H(n, \beta n)) \le 2^{k(1-\epsilon) + n\delta_n + k\delta_k} $$
This follows from the following chain:
\begin{align} \bbalpha(\bar H(k, \alpha k) \ltimes \bar H(n,\beta n)) 
	&\le 2^k {\theta_S(H(n,\beta n))\over \theta_S(H(k, \alpha k))}\label{eq:tm22}\\
	&\le 2^{k + nR_{LP2}(\beta) -k R_{Sam}(\alpha) + n\delta_n + k\delta_k}\label{eq:tm23}\\
	&\le 2^{k(1-\epsilon) + n\delta_n + k\delta_k}\label{eq:tm24}\,,
\end{align}
where~\eqref{eq:tm22} is from~\eqref{eq:thetas_b1},~\eqref{eq:tm23} is from~\eqref{eq:tmrrw} and~\eqref{eq:tsam}
and~\eqref{eq:tm24} is from~\eqref{eq:tm2_1}. 

If instead of~\eqref{eq:tm2_1} the pair $(\alpha,\beta)$ satisfies~\eqref{eq:tm2_2} then the argument is the same except
in~\eqref{eq:tm22} we should apply~\eqref{eq:alpha_b2} and~\eqref{eq:tkl} to get:%
\footnote{%
Note that another result of Samorodnitsky~\cite[Proposition 1.2]{AS_Delsarte} shows that up to factors $2^{o(m)}$ we
have%
$$\theta_L(\bar H(m, \lambda m))
\approx \theta_S(\bar H(m, \lambda m)) \approx \bbalpha(\bar H(m,\lambda m)) = 2^{m h(\lambda/2) + o(m)}\,. $$ 
Therefore here
we could still operate with $\theta_S$ only and apply~\eqref{eq:thetas_b2}. We chose to use $\bbalpha$'s because
Kleitman's theorem~\eqref{eq:tkl} has explicit non-asymptotic form and thus for finite $k,n$ results in a better bound.
}
$$ \bbalpha(\bar H(k, \alpha k) \ltimes \bar H(n,\beta n)) \le 2^{k + n(1- n h(\beta/2) +\delta_n) - k (1-h(\alpha/2)
-\delta_k)} $$
and the rest of the proof is the same.

Finally, to show the statement about the number of pairs satisfying~\eqref{eq:tm2_2a} define a graph $G$ with vertices $\FF_2^k$ and $x\sim x'$ 
if~\eqref{eq:tm2_2a} holds. We have already shown
$$ \bbalpha(G) \le 2^{k(1-\epsilon) + n\delta_n + k\delta_k}\,.$$
Then from Turan's theorem we have
$$ |E(G)| \ge {|V(G)|\over 2} \left({|V(G)|\over \bbalpha(G)} - 1\right) \ge  2^{k(1+\epsilon-\delta_k)-n\delta_n} $$
(after enlarging $\delta_k$ slightly).
\qed
\end{proof}

\medskip

\begin{proof}[Theorem~\ref{th:main3}] The argument follows step by step the proof of Theorem~\ref{th:main3} except that
at~\eqref{eq:tm22} we use the (almost) exact value of $\theta_S(H(n,d))$ for $d>n/2$ found in the Lemma below.
\end{proof}

\begin{lemma}\label{th:plot} For any $\lambda\in(1/2,1)$ there exists $\delta_n(\lambda)\ge0$ s.t.
\begin{align} {2\lambda\over 2\lambda - 1} -\delta_n(\lambda) &\le \theta_S(H(n, \lfloor \lambda n\rfloor)) 
			\label{eq:p1}\\
				&\le {2\lambda \over 2\lambda -1}\,. \label{eq:p2}
\end{align}
Furthermore, $\delta_n(\lambda)\to 0$ uniformly on compacts of $(1/2, 1)$.
\end{lemma}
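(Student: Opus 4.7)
The plan is to exploit the vertex-transitivity of $H(n,d)$, $d=\lfloor\lambda n\rfloor$, which allows the optimum in~\eqref{eq:ts_max} to be taken distance-invariant, $M=\sum_j m_j A_j$ (with $A_j$ the distance-$j$ adjacency matrix), whose eigenvalues on the $k$-th Krawtchouk eigenspace are $\Lambda_k=\sum_j m_j K_j(k)$.

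For the upper bound~\eqref{eq:p2}, I apply a semidefinite Plotkin-type argument. Let $M$ be any matrix feasible in the primal~\eqref{eq:ts_max} for $\theta_S(H(n,d))$, and introduce the coordinate sign vectors $v_i(x)=(-1)^{x_i}$ for $i=1,\dots,n$. Since $M\succeq 0$ we have $\sum_i v_i^{\top} M v_i\ge 0$, which via the identity $\sum_i (-1)^{x_i+y_i}=n-2d(x,y)$ expands to $\sum_{x,y}M_{x,y}(n-2d(x,y))\ge 0$. Combined with the support constraints $M_{x,y}\ge 0$ and $M_{x,y}=0$ for $0<d(x,y)\le d$, this gives $(d+1)(\tr JM-1)\le (n/2)\tr JM$, hence $\tr JM\le 2(d+1)/(2(d+1)-n)\le 2\lambda/(2\lambda-1)$; the last inequality uses $d+1\ge\lambda n$ together with the monotonicity of $t\mapsto 2t/(2t-n)$ on $(n/2,\infty)$.

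For the lower bound~\eqref{eq:p1}, I construct an explicit primal $M$ with $m_j=0$ for $j\notin\{0,d+1,d+2\}$, $m_0=2^{-n}$, and $m_{d+1},m_{d+2}\ge 0$ chosen by solving the $2\times 2$ linear system $\Lambda_1=\Lambda_n=0$. Since $K_j(1)=\binom{n}{j}(n-2j)/n$ and $K_j(n)=(-1)^j\binom{n}{j}$, this system has an explicit nonnegative solution throughout the Plotkin regime $\alpha:=2(d+1)-n>0$; a direct evaluation then gives $\tr JM=(\alpha+n+2)/(\alpha+1)$ (or the analogous $(\alpha+n)/(\alpha+1)$ when $d+1$ is even), which tends to $2\lambda/(2\lambda-1)$ as $n\to\infty$ uniformly on compact subsets of $\lambda\in(1/2,1)$ (since $2\lambda-1$ is bounded away from zero there), yielding~\eqref{eq:p1} with an explicit $\delta_n(\lambda)=O(1/n)$.

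The main remaining step is verifying that the \emph{other} eigenvalue constraints $\Lambda_k\ge 0$ for $2\le k\le n-1$ are satisfied by the above $M$. I expect this to follow from standard Krawtchouk polynomial estimates in the Plotkin regime $\lambda>1/2$: the ratios $|K_j(k)|/\binom{n}{j}$ for $j\in\{d+1,d+2\}$ attain their extreme values at $k=1$ and $k=n$, so the two binding constraints enforced in the construction are the most restrictive ones, while all intermediate $\Lambda_k$ inherit positive slack. If this monotonicity fails for some finite $n$, one can enlarge the support to $\{0,d+1,\dots,d+\ell\}$ for a bounded $\ell$ and re-solve, without changing the asymptotic value $2\lambda/(2\lambda-1)$.
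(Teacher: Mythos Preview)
Your upper-bound argument is correct and is essentially the primal rephrasing of the paper's dual Plotkin choice $g(x)=2(d+1-x)$; nothing to add there.

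For the lower bound, your construction is very close in spirit to the paper's: the paper also takes $f$ (equivalently $M$) supported just past the forbidden band, namely at $\{0,d+1\}$ for odd $d$ (handling even $d$ by $\theta_S(H(n,d))\ge\theta_S(H(n,d+1))$), and chooses the free coefficient so that the eigenvalue constraint at $k=1$ is tight. Your two-parameter variant with support $\{0,d+1,d+2\}$ and tightness at $k=1,n$ is a natural alternative and would give the same asymptotics once feasibility is verified.

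The genuine gap is the verification of $\Lambda_k\ge 0$ for $2\le k\le n-1$, which you defer to an ``extremes at $k=1$ and $k=n$'' heuristic. That heuristic is not correct as stated: for $j=d+1$ one has $K_{d+1}(k)=(-1)^k K_m(k)$ with $m=n-d-1<n/2$, and $K_m(k)$ is large (of order $\binom{n}{m}$) for all $k$ below its first root $x_1\approx n/2-\sqrt{m(n-m)}$, not just at $k=1$. So the constraint at every odd $k\le x_1$ is potentially binding, and one must argue separately in the pre-oscillation and oscillation regions. The paper does exactly this: on $[1,x_1]$ it uses that $K_m$ is monotone decreasing, so the worst odd $k$ is $k=1$; on $[\xi n,n/2]$ with $\xi=\tfrac12-\sqrt{\lambda(1-\lambda)}$ it invokes the uniform bound $|K_m(\omega)|\le 2^{n/2}\binom{n}{m}^{1/2}\binom{n}{\omega}^{-1/2}$ and observes that the resulting exponent $1-h(\lambda)-h(\xi)=R_{LP1}(1-\lambda)-(1-h(1-\lambda))$ is strictly negative (the LP1/GV gap), so the constraint is satisfied with exponential slack for large $n$. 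Your fallback of enlarging the support to $\{0,d+1,\dots,d+\ell\}$ does not escape this work: the same Krawtchouk estimates are needed regardless of $\ell$. In short, the missing piece is precisely the Krawtchouk analysis that constitutes the bulk of the paper's proof; once you supply it (monotonicity before $x_1$, the $2^{n/2}\binom{n}{m}^{1/2}\binom{n}{\omega}^{-1/2}$ bound after, and the LP1--GV gap), your construction goes through.
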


\begin{proof} We need to introduce the standard definitions from linear programming bounds in coding theory,
cf.~\cite[Ch. 17]{MS1977}. Any polynomial $f(x) \in \mreals[x]$ of degree $\le n$ can be represented as
$$ f(x) = 2^{-n} \sum_{j=0}^n \hat f(j) \krawt_j(x)\,,$$
where Krawtchouk polynomials are defined as
\begin{equation}\label{eq:krawt}
	\krawt_j(x) \eqdef \sum_{k=0}^n (-1)^j {x \choose k} {n-x \choose
j-k} 
\end{equation}
and, for example, $\krawt_0(x)=1$, $\krawt_1(x)=n-2x$.

It is a standard result~\cite[Theorem 3]{AS79} that for the Hamming graphs the semidefinite program~\eqref{eq:ts_max}
becomes a linear program. We put it here in the following form:
\begin{align} \lefteqn{\theta_S(H(n, d))}&\nonumber\\
&= \max\left\{ {\hat f(0)\over f(0)}: \hat f \ge 0, f(x)=0, x\in[1,d]\cap\mathbb{Z}, f(x) \ge 0,
x\in[0,n]\cap \mathbb{Z}\right\}
			\label{eq:ts_ham1}\\
		     &= \min\left\{ 2^n {g(0)\over \hat g(0)}: \hat g \ge 0, g(x)\le 0, x \in[d+1, n]\cap \mathbb{Z},
		     \hat g(0)>0 \right\} 				\label{eq:ts_ham2}
\end{align}
\apxonly{$$ =\min\left\{ \max_{\omega} \hat h(\omega): h(0) \ge 1, h(x) \ge 1, x\in[d+1,n]\cap \mathbb{Z}
		     \right\} $$}
where $f$ and $g$ are polynomials of degree at most $n$.

The upper bound~\eqref{eq:p2} is a standard Plotkin bound (see~\cite[Ch. 17, \S4]{MS1977}): taking $g(x) =2(d+1-x)$ we notice that 
$$ g(x) = \krawt_1(x) + (2(d+1)-n)\krawt_0(x)\,. $$
Thus $\hat g(0) = 2^n(2d+2-n)$ and we get for $d=\lfloor \lambda n\rfloor$
$$ \theta_S(H(n,d)) \le {2(d+1)\over 2d+2-n} \le {2\lambda \over 2\lambda -1}\,.$$

For the lower bound~\eqref{eq:p2} we assume that $d=\lfloor \lambda n\rfloor$,
\begin{equation}\label{eq:p6}
	\lambda \in[1/2-\epsilon, 1-\epsilon]  
\end{equation}
and $n$ is sufficiently large (for all small $n$ we may take $\delta_n(\lambda)={2\lambda\over 2\lambda-1}$). We first
consider the case of $d$ -- odd.

Consider the polynomial $f(x)=2^{-n} \sum_{\omega=0}^n \krawt_\omega(x)$ with coefficients given by
\begin{equation}\label{eq:p8}
	\hat f(\omega) = \krawt_0(\omega) + r {n \choose d+1}^{-1} \krawt_{d+1}(\omega)\,, \omega = 0, 1, \ldots, n 
\end{equation}
where $r\in(0,1)$ is to be determined. To compute values of this polynomial, we employ the orthogonality relation for
Krawtchouk polynomials, cf.~\cite[(34)]{KL01}:
$$ \sum_{\omega=0}^n \krawt_\ell(\omega) \krawt_\omega(x) = 2^n 1\{x=\ell\}, \qquad \forall x,\ell \in[0,n]\cap
\mathbb{Z} $$
From here we get
$$ f(x) = \begin{cases} 1, & x=0\\
			r {n \choose d+1}^{-1}, & x = d+1\\
			0, &\mbox{all other $x\in [0,n]\cap \mathbb{Z}$}\,.
	\end{cases} $$
We note that this $f(x)$ was guessed by studying Levenshtein's codes that attain Plotkin bound~\cite[Chapter 2, Theorem
8]{MS1977}. 

To verify that $f(x)$ is a (asymptotically!) feasible solution of~\eqref{eq:ts_ham1} we need to check $\hat f(\omega)\ge 0$. First, 
let $m = n-d-1 \le {n/2}$ and notice that~\cite[(31)-(32)]{KL01}
$$ \krawt_{d+1}(\omega) = (-1)^{\omega} \krawt_{m}(\omega) = (-1)^{(n-\omega)+(m-n)} \krawt_{m}(n-\omega)\,.$$
Therefore, since $n-m$ is even it is sufficient to verify
\begin{equation}\label{eq:p3}
	(-1)^\omega \krawt_m(\omega) \ge -{1\over r} {n\choose m} 
\end{equation}
for all $\omega \in [0,n/2]\cap\mathbb{Z}$. For $\omega = 0$ this is obvious, for 
$\omega = 1$ we have~\cite[(13)]{KL01}
$$ \krawt_m(1) = {n-2m\over n} {n\choose m} $$
and thus taking 
$$ r={n\over n-2m} $$
makes~\eqref{eq:p3} hold at $\omega=1$. 

It is known that $\krawt_m(x)$ has $m$ real roots with the smallest root
$x_1$ satisfying~\cite[(71)]{KL01}
\begin{equation}\label{eq:p4}
	x_1 \ge {n\over 2} - \sqrt{m(n-m)}\,.
\end{equation}
Therefore, polynomial $\krawt_m(x)$ is decreasing on $(-\infty, x_1]$ and
hence~\eqref{eq:p3} must also hold for all odd $\omega \in[1,x_1]$ (for even $\omega \le x_1$, inequality~\eqref{eq:p3} holds
just by considering the signs). In view of~\eqref{eq:p4} we only need to show~\eqref{eq:p3} for 
$ \xi n \le \omega \le n/2$, where
\begin{equation}\label{eq:p5}
	\xi = {1\over 2} - \sqrt{\lambda(1-\lambda)}\,. 
\end{equation}
In this range, we will show a stronger bound
\begin{equation}\label{eq:p3a}
	|\krawt_m(x)| \le {1\over r} {n\choose m}\,.
\end{equation}

The following bound is well known~\cite[(87)]{KL01}\footnote{To get an explicit estimate on $\delta_n(\lambda)$
in~\eqref{eq:p1}, we could use the non-asymptotic bound in~\cite[Lemma
4]{YP13}, which also holds for $\omega < \xi n$.}:
$$ |\krawt_m(\omega)| \le 2^{n\over 2} {n\choose m}^{1\over 2} {n \choose \omega}^{-{1\over2}}\,.$$
Note that by the constraint~\eqref{eq:p6}  
$\xi$ is bounded away from $0$ and thus we can estimate
$$ {n \choose m} {n \choose \omega}^{-1} \le 2^{n (h(\lambda) - h(\xi) + \delta'_n)} $$
for some sequence $\delta'_n$ that only depends on $\epsilon$. Thus comparing the exponents on both sides
of~\eqref{eq:p3} we see that it will hold provided that 
\begin{equation}\label{eq:p7}
	r^{-2} \ge 2^{n (1-h(\lambda) + h(\xi)) + n\delta'_n}\,.
\end{equation}
But notice that by~\eqref{eq:p5} the exponent in parenthesis is exactly the gap between the Gilbert-Varshamov bound
$1-h(1-\lambda))$ and the first linear programming bound $R_{LP1}(1-\lambda)$, cf.~\eqref{eq:rlp1}. 
There exists $\epsilon'>0$ separating these two bounds for all $\lambda$'s in~\eqref{eq:p6}. Thus, the right-hand side
of~\eqref{eq:p7} is exponentially decreasing $2^{-\epsilon' n + n\delta'_n}$ and hence for sufficiently large $n$ it
must hold. This completes the proof that $f(x)$ in~\eqref{eq:p8} is a feasible choice in~\eqref{eq:ts_ham1}. Therefore,
we have shown that for all $n$ sufficiently large 
$$ \theta_S(H(n,d)) \ge 1+r = {2d+2\over 2d+2-n} $$
if $d$ is odd and 
$$ \theta_S(H(n,d)) \ge \theta_S(H(n,d+1)) \ge {2d+4\over 2d+4-n} $$
if $d$ is even.
\qed
\end{proof}

\smallskip
\begin{proof}[Corollaries~\ref{th:cor2} and~\ref{th:cor3}] 
Assume to the contrary that one found $\alpha,\beta$ and $u_1,\ldots,u_k,v_1,\ldots,v_n\in\PP^{m-1}$ s.t. there is no
hyperplane satisfying~\eqref{eq:ab_hlin}. Then as explained in Section~\ref{sec:lincode} below (see~\eqref{eq:ab_linx}),
there is an independent set of size $2^m$ in $\bar H(k,\alpha k) \boxtimes H(n, \beta n)$. By inspecting the proofs
of Theorem~\ref{th:main2} and~\ref{th:main3} we notice that they prove three different upper bounds on 
$\bbalpha(\bar H(k,\alpha k) \boxtimes H(n, \beta n))$ that are equal to exponentiation of the left-hand sides
of~\eqref{eq:corA},~\eqref{eq:corB} and~\eqref{eq:corC} respectively. Therefore, $m$ cannot satisfy any
of~\eqref{eq:corA},\eqref{eq:corB} or~\eqref{eq:corC} -- a contradiction. \qed
\end{proof}

\apxonly{
\subsection{Knowledgebase for $H(n,d)$ and $J(n,d,w)$}

Some classical results:
\begin{itemize}
\item Kneser graph is the complement of the max-distance Johnson:
$$ K(n,w) = \bar J(n,2w-1,w)\,. $$
\item Erdos-Ko-Rado and its $\theta$-extensions:
	\begin{align} 
		\bbalpha=\theta_S&=\theta_L=\theta_z(K(n,w)) = {n-1\choose w-1}\\
		\lfloor {n\over w}\rfloor = \bbalpha(\bar K(n,w)) &\le \theta_S \le \theta_L=\theta_z(\bar
		K(n,w)) = {n\over w}\label{eq:knes_dat}\\
	   	 \theta_S&\le \theta_L(J(n,2w,w)) = {n\over w}\,. 
	\end{align}		 
\item Note: EKR also proves stuff for $\bar J(n, 2w-t,w)$ when $n\gg w$. And later Schrijver extended it to $\theta_L$
too. TODO!
\item Kneser conjecture:
	$$ \chi(\bar J(n, 2w-1, w)) = n-2w+2 $$
\end{itemize}

We review some of the well known asymptotic results on Hamming and Johnson graphs. First, Kleitman~\cite{DK66}
and Ahslwede-Katona show:
\begin{align} {1\over n} \log \bbalpha(\bar H(n, \delta n)) &= h(\delta/2) + o(1)\\
   {1\over n} \log \bbalpha(\bar J(n, \delta n, \xi n)) &= 
   	a h\left( b \over a\right) + 
			(1-a) 
	h\left( {\xi - b\over 1-a}  \right)
			\\
			& a={\xi - \delta/2\over 1-\delta}, b={(\xi - \delta/2)(2-\beta)\over2-2\beta}\\
\end{align}
In the first case the optimal set is $B(n, \delta n/2)$, in the second case it is $S(n, \xi n) \cap \{u+B(n, \theta
n)\}$ and $|u| = n(\xi - {\delta/2}+2\theta)$ (eccentric ball) and $\theta = {\delta\over2}{\xi - \delta/2 \over
1-\delta}$. 

Samorodnitsky showed~\cite{AS_Delsarte}:
\begin{align} {1\over n} \log \theta_L(\bar H(n, \delta n)) &= h(\delta/2) + o(1)\,,\\
   {1\over n} \log \theta_S(\bar H(n, \delta n)) &= h(\delta/2) + o(1)\,,\\
   {1\over n} \log \theta_L(H(n,\delta n)) &= 1-h(\delta/2) + o(1)\qquad\mbox{By Lovasz identity}
\end{align}

Next, the McEliece et al bounds~\cite{MRRW77} give\footnote{The bound for the Johnson graph can be improved for small
$\beta$ by appealing to Levenshtein's monotonicity, cf~\cite[Lemma 1.4]{AS01}, but we will not need this bound here.}
\begin{align} {1\over n} \log \theta_S(H(n, \delta n)) &\le R_{LP2}(\delta) + o(1)\\
{1\over n} \log \theta_S(J(n, \delta n, \xi n)) &\le R_{LP}(\delta,\xi) + o(1)\,,
\end{align}
where $R_{LP2}$ was defined in~\eqref{eq:rlp2} and 
$$ R_{LP}(\delta, \xi) = \begin{cases} 0, &\delta > 2\xi(1-\xi)\,,\\
				h\left({1-\sqrt{1-u^2}\over2}\right), &u=\sqrt{\delta^2-2\delta+4\xi(1-\xi)}-\delta
			\end{cases}
$$
Regarding the lower bounds on $\bbalpha$, the best ones (for binary case) are the 
Gilbert-Varshamov bounds (Turan's theorem, equivalently):
\begin{align} {1\over n} \log \bbalpha(H(n, \delta n)) &\ge 1-h(\delta) + o(1)\,,\\
{1\over n} \log \bbalpha(J(n, \delta n, \xi n)) &\ge R_{GV}(\delta,\xi) + o(1)\,,
\end{align}
where
$$ R_{GV}(\delta,\xi) = \begin{cases}
			0, &\delta > 2\xi(1-\xi)\,,\\
			h(\xi) -\xi h\left(\delta\over 2\xi\right) - (1-\xi) h\left(\delta\over 2(1-\xi)\right)\,,
			&\mbox{otherwise}
			\end{cases}\,.$$
Finally, for $\theta_S$ better bounds were found by Samorodnitsky~\cite{AS01}:
\begin{align} {1\over n} \log \theta_S(H(n, \delta n)) &\ge {1\over 2}(1-h(\delta) + R_{LP1}(\delta)) + o(1)\\
	{1\over n} \log \theta_S(J(n, \delta n, \xi n)) &\ge {1\over 2}(R_{GV}(\delta,\xi) + R_{LP}(\delta,\xi)) + o(1)\,,
\end{align}
where $R_{LP1}$ was defined in~\eqref{eq:rlp1}.}

\section{Discussion and open problems}\label{sec:disc}

\begin{figure}[t]
\centering
\subfigure[$\rho=3$]{\label{fig:eval1}\includegraphics[width=0.4\textwidth]{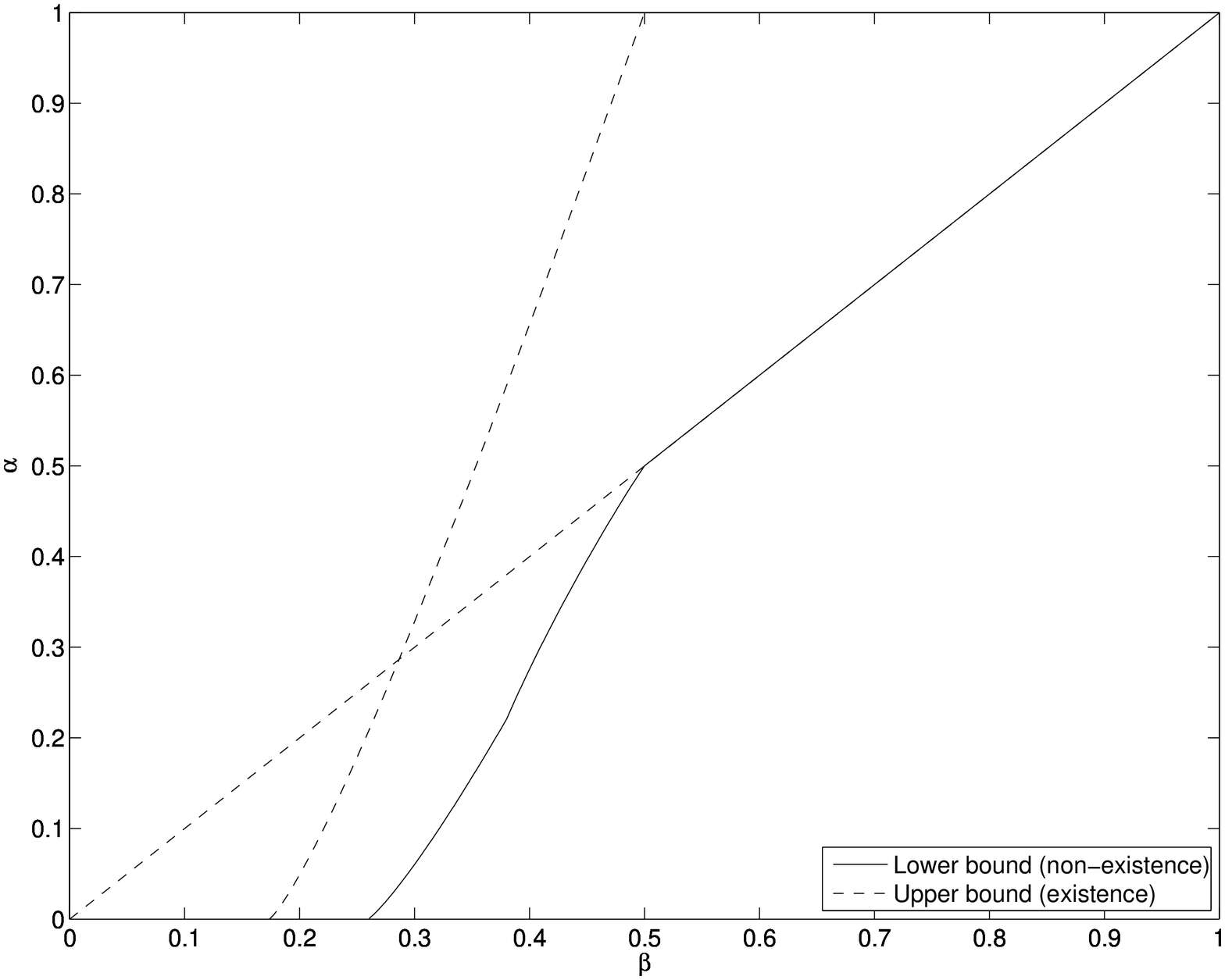}}
\subfigure[$\rho={1\over3}$]{\label{fig:eval2}\includegraphics[width=0.4\textwidth]{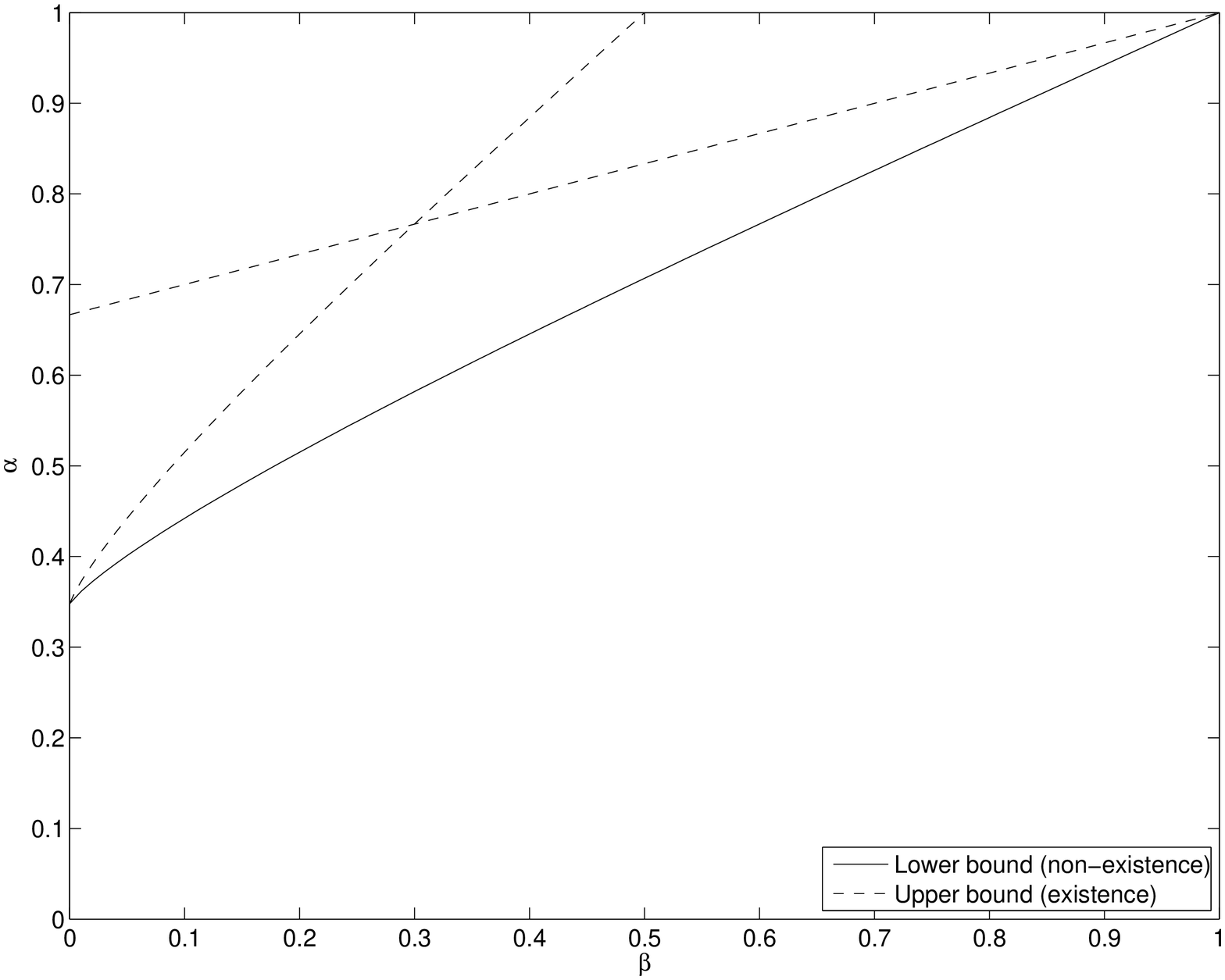}}
\caption{Bounds on minimal possible $\alpha$ for a given $\beta\in(0,1)$ in the asymptotics $n,k\to\infty$ and ${n\over
k}=\rho$.}\label{fig:eval}
\end{figure}

\subsection{Evaluation}

In this section we evaluate our bounds. We consider the asymptotic setting $k\to \infty$ and $n=\rho k$ where $\rho$ is
fixed. In Fig.~\ref{fig:eval1} ($\rho=3$) and Fig.~\ref{fig:eval2} ($\rho=1/3$) we plot the various 
bounds on the region of asymptotically feasible pairs $(\alpha,\beta)$:
\begin{itemize}
\item For $\rho=3$ the lower bound for $0<\alpha <1/2$ is~\eqref{eq:ccsam} from Theorem~\ref{th:main1}; for $1/2 \le \alpha \le 1$ is Theorem~\ref{th:main3}.
\item For $\rho=1/3$ the lower bound (for all $\alpha$) is~\eqref{eq:it}. In this case the other two
bounds,~\eqref{eq:ccsam} and~\eqref{eq:tm3}, are strictly worse.
\item For $\rho=3$ the straight dashed line denotes performance of the repetition map~\eqref{eq:rep}.
\item For $\rho={1\over3}$ the straight dashed line denotes performance of the \textit{majority-vote} map. Namely
$f:\FF_2^{3n}\to\FF_2^n$ gives a majority vote for every one of $3$-bit blocks. It is clear that for all $x,x'$ we have:
$$ |f(x) - f(x')| \le \beta n \quad\implies\quad |x-x'| \le {2+\beta\over 3} n $$
Indeed any pair of $3$-bit strings for which majority-vote agrees can be at most Hamming distance $2$ away (as $001$ and
$010$).%
\apxonly{In general, for odd $1\over \rho$ we have:
   $$ \alpha = \rho \beta + \bar \rho $$
}
\item Finally, the curved dashed line corresponds to the \textit{separation
map} defined as follows. Fix $\alpha \in (0,1)$ and cover $\FF_2^k$ with balls of radius $\alpha k/2$. It is sufficient
to have $2^{k (1-h(\alpha/2))+o(k)}$ such balls. Also consider a packing of balls of radius $\beta n/2$ in $\FF_2^n$.
By Gilbert-Varshamov bound we know that we can select at least $2^{n (1-h(\beta)) + o(n)}$ such balls. Thus whenever
$$ k (1-h(\alpha/2))+o(k) \le n (1-h(\beta)) + o(n) $$
we can construct the map $f:\FF_2^k\to\FF_2^n$ that maps every point inside an $\alpha k/2$ ball to a center of the
corresponding packing ball. Clearly, such map will be an $(\alpha,\beta)$ map. Thus, asymptotically all pairs of
$(\alpha,\beta)$ s.t. $\beta \le 1/2$ and
$$ 1-h(\alpha/2) \ge \rho (1-h(\beta)) $$
are achievable.
\end{itemize}

Note that as $\beta \upto {1\over2}$ the bound~\eqref{eq:ccsam} becomes:
\begin{equation}\label{eq:ccsam_x}
	\alpha \ge {1\over2} - \sqrt{\rho} \left({1\over 2} - \beta\right) + o(1-2\beta)\,,\qquad \beta\to {1\over2}\,. 
\end{equation}
This is a significant improvement over what the simple bound~\eqref{eq:ccb} yields:
$$ \alpha \ge {1\over2} - \sqrt{\rho\over 2\log_2 e} (1-2\beta) \log_2 {1\over 1-2\beta} + o(h((1/2-\beta)^2)), \qquad
\beta\to {1\over2}\,. $$
In particular,~\eqref{eq:ccsam_x} has finite slope at $\alpha=\beta={1\over2}$ and furthermore as $\rho\downto 1$ the
slope-discontinuity at $(1/2,1/2)$, see Fig.~\ref{fig:eval1}, disappears. This last effect is a consequence of the
Navon-Samorodnitsky~\cite{navon2005delsarte} part of the bound~\eqref{eq:rsam_def}.

\subsection{On list-decodable codes}
One of the more interesting conclusions that we can draw from our bounds is the following. It is well known that
there is only finitely many balls of radius $\ge n(1/4+\epsilon)$ that can be packed inside $\FF_2^n$ without
overlapping (Plotkin bound). However, if one allows these balls to cover each point with multiplicity at most 3 then it
is possible to pack exponentially many balls~\cite{blinovsky1986bounds}. Thus, if one is allowed to decode into lists of
size 3, it is possible to withstand adversarial noise of weight ${1/4 + \epsilon}$ while still having non-zero
communication rate.

By setting $\beta = 1/2 + 2\epsilon$ and applying Theorem~\ref{th:main3} we figure out, however, that no matter how the
balls are labeled by $k$-bit strings, at least one ball of radius $1/4+\epsilon$ will contain a pair of points whose
labels differ in at least $(1/2 + 2\epsilon)k$ positions. So although list-decoding allows one to overcome the $1/4$
barrier, there is no hope (in the worst case) to recover any information bits from the labels. This is only true 
beyond the radius $1/4$, since of course, below $1/4$ one can use codes with list-1. Loosely speaking, we have a ``phase-transition'' in the
communication problem at noise-level $1/4$.

\subsection{Linear programming bound}

It is possible to write a linear program for $\theta_S(\bar H(k,\alpha k) \ltimes \bar H(n,\beta n))$ similarly to the
standard Delsarte's program~\eqref{eq:ts_ham1}-\eqref{eq:ts_ham2}. To that end, for an arbitrary polynomial $f(x,y)$ of
degree at most $k$ in $x$ and at most $n$ in $y$ we can write it as
$$ f(x,y) = 2^{-n-k} \sum_{i=0}^k \sum_{j=0}^n \hat f(i,j) \krawtk_i(x) \krawt_j(y)\,,$$
where $\krawtk_i(x)$ and $\krawt_j(y)$ are Krawtchouk polynomials~\eqref{eq:krawt}. With this definition of the Fourier
transform $\hat f$ we have:
\begin{align}
	\lefteqn{\theta_S(\bar H(k, \alpha k) \ltimes \bar H(n, \beta n))}&\nonumber\\
		&= 
		\max \left\{ {\hat f(0,0)\over f(0,0)}: \hat f \ge 0, f(x,y) = 0 \forall (x,y) \in
		\mathcal{D}\setminus(0,0) \right\}\label{eq:tsh_ham1}\\
		     &= \min\left\{ 2^{k+n} {g(0,0)\over \hat g(0,0)}: \hat g \ge 0, g(x,y)\le 0, \forall (x,y) \in
		     \mathcal{D}^c \setminus (0,0) \right\} 				\label{eq:tsh_ham2}
\end{align}
where $f,g$ are bi-variate polynomials of degree at most $(k,n)$, and 
\begin{align} \mathcal{D} &\eqdef \left\{(x,y): x \in [0,k]\cap \mathbb{Z}, y\in [0,n]\cap \mathbb{Z}, (x=0, y\neq 0)\mbox{~or~}(x>\alpha
k, y \le \beta n)\right\}\\
   \mathcal{D}^c &\eqdef \left\{(x,y): x \in [0,k]\cap \mathbb{Z}, y\in [0,n]\cap \mathbb{Z}, (0<x\le \alpha
   k)\mbox{~or~}(x\neq 0, y>\beta n) \right\} 
\end{align}   

\apxonly{For computing $\theta_S(\bar H(k, \alpha k) \boxtimes H(n, \beta n))$ we get same program but with
\begin{align} \mathcal{D} &\eqdef \left\{(x,y): x \in [0,k]\cap \mathbb{Z}, y\in [0,n]\cap \mathbb{Z}, (x=0, y\le \beta
n)\mbox{~or~}(x>\alpha
k, y \le \beta n)\right\}\\
   \mathcal{D}^c &\eqdef \left\{(x,y): x \in [0,k]\cap \mathbb{Z}, y\in [0,n]\cap \mathbb{Z}, (0<x\le \alpha
   k)\mbox{~or~}(y>\beta n) \right\} 
\end{align}   
}

The bound used in Theorem~\ref{th:main2} states
$$  
\theta_S(\bar H(k, \alpha k) \ltimes \bar H(n, \beta n)) \le 2^{k + nR_{LP2}(\beta) -kR_{Sam}(\alpha) + o(n)+o(k)}
$$
This bound corresponds to the following choice of $g(x,y)$ in~\eqref{eq:tsh_ham2}:
$$ g(x,y) = f_1(x) g_1(y)\,,$$
where $f_1(x)$ is the Samorodnitsky assignment~\cite{AS01} in the primal for $\theta_S(H(k, \alpha
k))$ and $g_1(y)$ is a standard choice of McEliece-Rodemich-Rumsey-Welch~\cite{MRRW77} in the dual for
$\theta_S(H(n,\beta n))$. In fact, any primal $f_1$ and any dual $g_1$ give a candidate for $g(x,y)$. Thus, we have 
$$ \theta_S(\bar H(k, \alpha k)\ltimes \bar H(n,\beta n)) \le 2^{k+n} {f_1(0)\over \hat f_1(0)} {g_1(0)\over \hat g_1(0)}\,.$$
Optimizing over all $f_1$ and $g_1$ we get
$$ \theta_S(\bar H(k, \alpha k)\ltimes \bar H(n,\beta n)) \le 2^k {\theta_S(H(n,\beta n))\over \theta_S(H(k,\alpha k))}\,.$$
(This, of course, is exactly how the bound was obtained in Lemma~\ref{th:sprod}.)

We were not able to find any choice of $g(x,y)$ in the dual problem~\eqref{eq:tsh_ham2} that beats the product
$f(x)g(y)$. This seems to be the most natural direction for improvement. Another open problem is to find an upper bound
on $\bbalpha(\bar H(k, \alpha k) \ltimes \bar H(n,\beta n))$ that does not follow from an upper bound on $\bbalpha(\bar
H(k,\alpha k) \boxtimes \bar H(n, \beta n))$ or to prove that these are asymptotically equivalent. \apxonly{\textbf{TODO!}}

\subsection{On repetition \& majority-vote}

By looking at Fig.~\ref{fig:eval1} we can see that relaxation of the minimum-distance property, cf. Def.~\ref{def:ab},
that we consider in this paper allows one to have non-zero rate even at ``minimum distance'' $\beta > 1/2$. However, in
this case $\alpha\ge \beta$ (Theorem~\ref{th:main3}) and furthermore repetition map~\eqref{eq:rep} is optimal (provided
$n/k\in\mathbb{Z}$). This raises a number of questions:
\begin{itemize}
\item Can one show that any $(\alpha,\beta)$-map in high-$\beta$ regime is  structured similarly to a repetition map? 
\item For the case when $n/k \not \in\mathbb{Z}$ (e.g. $\rho=3/2$), how do we asymptotically achieve $\alpha \approx
\beta$? \apxonly{It is fairly easy to show that it is not possible to have $\alpha=\beta$ for finite $n,k$ in this
case.}
\item The corresponding situation with majority-vote maps is even worse, here we need $k\over n$ be an odd integer. What
is the counterpart for even $k\over n$?
\item Finally, how do we smoothly interpolate between the ``non-smooth'' separation construction (that is not
even injective) and the repetition map? 

\apxonly{Note however, that it is possible to have an infinite sequence of
$(\alpha,\beta)$-maps with $1>\alpha>\beta>1/2$ that also has non-vanishing relative minimum distance. Just consider a
sequence of systematic code of rate $1/2$ and copy the systematic part twice. This achieves $\beta = {\alpha
+2\delta}/3$ where $\delta = 0.11$ is the min dist.}
\end{itemize}

In fact the last question was our main practical motivation for looking into the concept of $(\alpha,\beta)$-maps. We
do not have any good candidates at this point.

\subsection{On linear codes}\label{sec:lincode}

A natural approach to construct good $(\alpha,\beta)$ maps is to restrict to linear maps $f:\FF_2^k\to\FF_2^n$. A linear
$f$ is $(\alpha,\beta)$ if 
$$ |x| > \alpha k  \quad \implies \quad |f(x)|>\beta n\,. $$
Instead of working with this condition, we get a more invariant notion by considering the graph of $f$ that is just a
linear subspace of $L \subset \FF_2^{k+n}$. Different conditions can be stated on $L$ that will ensure that $L$ defines an
$(\alpha,\beta)$-map, or that it gives an independent set in $\bar H(k,\alpha k) \ltimes \bar H(n,\beta n)$, or even an
independent set in $\bar H(k, \alpha k) \boxtimes H(n,\beta n)$.

We state these conditions for a general field $\FF$ and also in a geometric language of~\cite{TSN90}. The extension of
the concept of an $(\alpha,\beta)$-map, cf.
Definition~\ref{def:ab}, and Hamming graphs $H_\FF(n,d)$, cf.~\eqref{eq:hnd_def}, to arbitrary field $\FF$ is obvious.

Suppose that we
have (not necessarily distinct) points 
$$ u_1, \ldots, u_k, v_1,\ldots v_n \in \PP^{m-1} $$
where $\PP^{m-1}$ is a projective space of dimension $m-1$ over the field $\FF$. For every codimension 1 hyperplane $H$
define
$$ Z_v(H) \eqdef \#\{j: v_j\in H\}, \quad Z_u(H) \eqdef \#\{i: u_i \in H\} \,.$$

By writing these two collections of points in homogeneous coordinates we get a $m \times (k+n)$ matrix over $\FF$, whose
row-span gives the linear subspace $L\subset \FF^{k+n}$. We then have the following statements:
\begin{enumerate}
\item If points $\{u_i, v_j, i\in[k], j\in[n]\}$ are not contained in any (codimension 1) hyperplane $H\subset
\PP^{m-1}$ and satisfy 
$$ \forall H: Z_v(H) \ge (1-\beta) n \implies k>Z_u(H) \ge (1-\alpha) k$$
then
\begin{equation}\label{eq:ab_linx}
		\bbalpha(\bar H_\FF(k, \alpha k) \boxtimes H_\FF(n, \beta n)) \ge |\FF|^m\,.
\end{equation}	
\item If points $\{u_i, i\in[k]\}$ are not contained in any (codimension 1) hyperplane $H\subset \PP^{m-1}$ and
$$ \forall H: Z_v(H) \ge (1-\beta) n \implies Z_u(H) \ge (1-\alpha) k $$
then
	\begin{equation}\label{eq:ab_linx2}
		\bbalpha(\bar H_\FF(k, \alpha k) \ltimes \bar H_\FF(n, \beta n)) \ge |\FF|^m 
\end{equation}	
	(note that assumption implies $k\ge m$ here).
\item If in addition to previous assumption we also have $k=m$, i.e. points $\{u_i, i\in[k]\}$ span $\PP^{k-1}$, then
there exists a linear $(\alpha,\beta)$-map and thus
	$$ \bar H_\FF(k, \alpha k) \to \bar H_\FF(n, \beta n)\,. $$
\end{enumerate}

Note that if $\{u_i\}$ span $\PP^{k-1}$ and $\alpha=0$ then condition~\eqref{eq:ab_linx2} simply states that one cannot
include more than $(1-\beta)n$ points from $\{v_j, j\in[n]\}$ into any hyperplane -- i.e. a standard geometric condition
for $[n,k,d]_q$-systems, cf.~\cite{TSN90}. Similarly to how $[n,k,d]_q$ systems exactly correspond to
$[n,k,d]_q$ linear codes, existence of points $\{u_i,v_j\}$ satisfying~\eqref{eq:ab_linx} and assumptions in item 3 is
\textit{equivalent} to existence of an $\FF$-linear $(\alpha, \beta; k, n)$-map.

As an example, consider $\FF=\FF_2$. We will construct a linear map $\FF_2^3 \to \FF_2^4$ by selecting seven points on the
binary projective plane $\PP_2^2$: $u_1,u_2,u_3$ are any points spanning $\PP^2$, 
$$ v_1=u_1, v_2=u_2, v_3=u_3 $$
and finally put $v_4$ to be the only point not contained in any of the lines $(v_1,v_2), (v_1,v_3),(v_2,v_3)$. See Fig.~\ref{fig:fano2}
for an illustration. It is easy to see that condition~\eqref{eq:ab_linx2} holds with $\alpha={2\over3}$ and
$\beta={3\over4}$, thus
$$ \bar H(3, 2) \to \bar H(4,3)\,. $$
Computing~\eqref{eq:tsh_ham2} with smaller $\alpha$ and larger $\beta$ shows that the code of Fig.~\ref{fig:fano2} is optimal.

\begin{figure}[t]
\centering
\includegraphics[width=.4\textwidth]{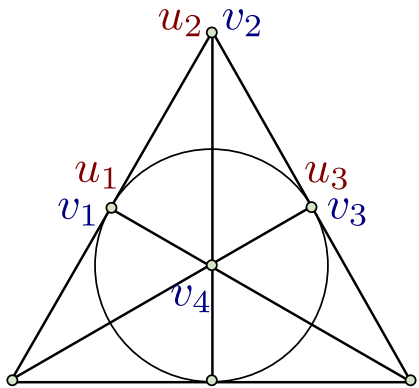}
\caption{Construction of the optimal $({2\over3}, {3\over4}; 3, 4)$-map by selecting seven points on the binary projective
plane.}\label{fig:fano2}
\end{figure}

\apxonly{
\section{Notes on linear constructions}
Equivalent definitions of $(\alpha,\beta)$ for linear $f(x)=xG$:
\begin{enumerate}
\item $|x|>\alpha k \implies |f(x)|>\beta n$ ~~(heavy goes to heavy)
\item $L$ is a subspace of $\FF_2^{k+n}$ with the property that $L\mapsto \mproj_{[k]} L$ is a bijection and
$$ \forall 0 \neq (x,y)\in L: |y| \le \beta n \implies |x| \le \alpha k $$
(codewords light in $n$-portion are also light in $k$-portion).
Equivalently,
$$ \forall (x,y)\in L: |y| > \beta n \mbox{~or~} |x| \le \alpha k\,. $$
\item There are $\{u_i\}_{i=1}^k$ and $\{v_j\}_{j=1}^n$ vectors in $\FF_2^k$ with the property that $\{u_i\}$ is a basis
and 
$$ \forall H\mbox{-hyperplane (codim 1) in~}\FF_2^k: |H\cap\{v_j\}| \ge \bar\beta n \implies |H\cap \{u_i\}| \ge \bar\alpha
k $$
That is the arrangement of $u$'s and $v$'s has the funny property that once any hyperplane contains many $v$'s it must
also contain many $u$'s.
\item Parity-check matrix point of view: $H=[C|D]$ is an $n\times (k+n)$ matrix of an $(\alpha,\beta)$-subspace $L$ iff 
$D$ is non-singular $n \times n$ and 
$$ \sum_{i\in S_2} d_i = \sum_{j\in S_1} c_j, |S_2| \le \beta n \quad \implies \quad |S_1| \le\alpha k\,,$$
where $d_i,c_j$ are columns of $C,D$. I.e. short-sums of $D$-columns either have no representation in $C$-columns or
only have short ones.
\item WLOG one may take $D=I_n$ above, then we are seeking $c_1,\ldots,c_k\in\FF_2^n$ with the property that
$$ \forall |S_1|>\alpha k: \quad \left|\sum_{i\in S_1} c_i\right| > \beta n\,. $$
I.e. when you add many $c_j$'s you necessarily get large weight.
\end{enumerate}

\bigskip
\textbf{Linear lower-bounds on $\bbalpha(X\ltimes Y)$ and $\bbalpha(X\boxtimes \bar Y)$.} Let $L$ be a
subspace of $\FF_2^{k+n}$, then:
\begin{enumerate} 
\item $L$ is an independent set of $\bar H(k, a) \ltimes \bar H(n, b)$
if 
$$ L\mapsto \mproj_1(L) \mbox{~is \textit{injective}} $$
and for every $(x,y) \in L$
$$ |x|> a \implies |y|>b  \qquad \mbox{or equiva:} \quad |x|\le a\mbox{~or~} |y|>b $$
(When $L\mapsto \mproj_1(L)$ is bijective, we also have a linear $(\alpha,\beta)$ map, and $|L|=|X|$.)
\item $L$ is an independent set of $\bar H(k,a)\boxtimes H(n,b)$ if for every $0\neq (x,y) \in L$
$$ |x|> a \mbox{~or~}x=0 \implies |y|>b \qquad \mbox{or equiva:} \quad 0<|x|\le a\mbox{~or~} |y|>b \,.$$
(so even injectivity of $\mproj_1$ is relaxed to allowing the fiber above $x=0$, i.e. $\mproj_1^{-1}(0)$, to be a
(linear) code in $\FF_2^n$ of minimum-distance bigger than $b$.)
\end{enumerate}
}

\apxonly{
\section{Random notes}
\begin{enumerate}
\item Questions around the top corner. $\bar H(k, k-1)$ is a disjoint union of $2^{k-1}$ cliques $K_2$. Thus
$$ \bar H(k, k-1) \to \bar H(n, n-1) $$
always exists, regardless of relations between $k$ and $n$. 

A similar question about Johnson graphs, which are just the Hamming spheres inside $\FF_2^k$, has a topological twist.
Indeed, since $\bar J(k, 2w-1,w)$ is just the Kneser graph $K(k,w)$ we have from Kneser conjecture:
 \begin{equation}\label{eq:topp1}
 	\bar J(k, 2w-1, w) \to \bar J(n, 2v-1, v)  \quad \implies k-2w \le n-2v \,. 
\end{equation} 
The bounds in this paper would be equivalent to comparing $\bbalpha(K(k,w))$ and $\bbalpha(K(n,v))$ (given by
Erd\"os-Ko-Rado) and result in:
 \begin{equation}\label{eq:topp2}
 	\bar J(k, 2w-1, w) \to \bar J(n, 2v-1, v)  \quad \implies {w\over k} \ge {v\over n}\,. 
\end{equation} 
Clearly,~\eqref{eq:topp1} provides extra restrictions in this case. For example, for the spheres of the same relative
radius, i.e. $w/k = v/n$, we get that $n\ge k$ is required. It would be very interesting to find similar
topological restrictions for the Hamming graphs.

\apxonly{For example, we could start with $\bar H(k, k-2)\to \bar H(n,n-2)$. Let us restrict to even $k,n$. Then
$\bbalpha$ and $\theta$ bounds seem to only imply $k \ge n$
(opposite to Johnson, why?).}

\item Another consideration about the top-corner comes from counting short odd-cycles: Let $\ell_1(G)$ be the length of
the shortest odd cycle. Then 
$$ G\to H \quad \implies \quad \ell_1(G) \ge \ell_1(H)\,.$$
The number of $m$-cycles from $0$ to $0$ in $\bar H(n, d)$ is given by
$$ \EE[(K_{d+1}^{(n)}(X_n) + \cdots K_{n}^{(n)}(X_n))^m], \qquad X_n \sim \mathrm{Bino}(n, 1/2). $$
\apxonly{So we get: $\bar H(k, \alpha k) \to \bar H(n, \beta n)$ implies that for all $m$:
\begin{align}
	& \EE[(K_{\beta n+1}^{(n)}(X_n) + \cdots K_{n}^{(n)}(X_n))^m] = 0 \quad \implies \quad\nonumber\\
	&\qquad \EE[(K_{\alpha k+1}^{(k)}(X_k) + \cdots K_{k}^{(k_k)}(X))^m] = 0 
			\label{eq:cycles}
\end{align}}
This can be used to show that $\ell_1(\bar H(n, n-2))=2\lceil {n+1\over 2} \rceil-1$,
$\ell_1(\bar H(n,n-3)))=2\lceil{n+1\over 4}\rceil+1$, etc.. In turn this gives simple examples of non-existence of
homomorphisms (not disproved by previous tools), e.g.:
$$  \bar H(2,0) \not\to \bar H(4,2) \not\to \bar H(6,4) \not\to \bar H(8,6) \not\to \cdots\,. $$
Can~\eqref{eq:cycles} lead to new asymptotic estimates?
\item Are there homomorphically equivalent ($G\to H$ and $H\to G$) pairs of complemented Hamming graphs?
\item For $\beta > 1/2$ the graphs $\bar H(n, \beta n)$ become very much locally disconnected, in the sense that they
stop containing small cliques and or any other graphs $G$ with large $\theta_S(\bar G)$. 

\item Nice question about LP: Frankl-Wilson (or Frankl-Rodl) show that any $S\subset \FF_2^k$ of size
$|S|>(2-\epsilon)^k$ must contain every distance $A_w(S) >0$ for all $w/n \in (\delta,1-\delta)$. Same for LP solutions?

\item Another question to discuss with Samorodnitsky: His assignment does not satisfy Chang's Lemma. Thus there cannot
be set $S$ s.t. its distance distribution equals Samorodnitsky. \textbf{Later:} Actually, it does satisfy Changs lemma.

\item \textbf{Spheres:} Note that we can prove also similar results for spheres: If 
	$$ kR_{GV}(\alpha) > n R_{LP}(\beta) $$
	then for any $f: \Sph^{k-1}\to\Sph^{n-1}$ there exist $x,x' \in \Sph^{k-1}$ s.t.
	$$ d(x,x') > \alpha, d(f(x), f(x')) \le \beta\,,$$
	where $d(\cdot, \cdot)$ is the geodesic distance (i.e. just angle between the vectors).

	\textbf{Question:} How does continuity of $f$ help? Maybe only under continuity we may guarantee the effective
	version such as $\forall S: \mathrm{vol}(S)\ge \ldots \exists x,x'\in S \ldots$?

\item Aaltonen has a method for producing bi-variate polynomials for the dual program. Check?

\item Note that the bounds in this paper correspond to considering
$$ K\rightarrow H_\alpha \to H_\beta \implies K\rightarrow H_\beta $$
(where $X\rightarrow Y$ is understood as $\theta_S(X\ltimes Y)=|X|$) and 
$$ H_\alpha \to H_\beta \to K \implies H_\alpha \to K $$
(this is because of~\eqref{eq:alphachi} and Kleitman). \textbf{Question:} Replace cliques $K$ with other graphs?!
\end{enumerate}
}

\section*{Acknowledgement}

This material is based upon work supported by the National Science Foundation under Grant No CCF-13-18620. Author is
grateful to the support of Simons Institute for the Theory of Computing (UC Berkeley), at which this work was finished.
Discussions with Prof. A. Mazumdar, A. Samorodnitsky and participants of the program on information theory at the Simons Institute were helpful.


\begin{thebibliography}{10}
\expandafter\ifx\csname url\endcsname\relax
  \def\url#1{\texttt{#1}}\fi
\expandafter\ifx\csname urlprefix\endcsname\relax\def\urlprefix{URL }\fi
\expandafter\ifx\csname href\endcsname\relax
  \def\href#1#2{#2} \def\path#1{#1}\fi

\bibitem{IM98}
P.~Indyk, R.~Motwani, Approximate nearest neighbors: towards removing the curse
  of dimensionality, in: Proc. 30th ACM Symp. Theory Comp. (STOC), ACM, 1998,
  pp. 604--613.

\bibitem{AP06}
A.~Andoni, P.~Indyk, Near-optimal hashing algorithms for approximate nearest
  neighbor in high dimensions, in: Proc. of 47th Ann. IEEE Symp. Found. Comp.
  Sci. (FOCS'2006), IEEE, 2006, pp. 459--468.

\bibitem{KMP12isit}
Y.~Kochman, A.~Mazumdar, Y.~Polyanskiy, The adversarial joint source-channel
  problem, in: Proc. 2012 IEEE Int. Symp. Inf. Theory (ISIT), Cambridge, MA,
  2012.

\bibitem{YP15-cjscc_isit}
A.~J. Young, Y.~Polyanskiy, Converse and duality results for combinatorial
  source-channel coding in binary {H}amming spaces, in: Proc. 2015 IEEE Int.
  Symp. Inf. Theory (ISIT), Hong Kong, China, 2015.

\bibitem{DS96}
D.~Spielman, Linear-time encodable and decodable error-correcting codes, {IEEE}
  Trans. Inf. Theory 42~(6) (1996) 1723--1731.

\bibitem{MRRW77}
R.~McEliece, E.~Rodemich, H.~Rumsey, L.~Welch, New upper bounds on the rate of
  a code via the {D}elsarte-{M}ac{W}illiams inequalities, {IEEE} Trans. Inf.
  Theory 23~(2) (1977) 157--166.

\bibitem{bollobas2013modern}
B.~Bollob{\'a}s, Modern graph theory, Vol. 184, Springer Science \& Business
  Media, 2013.

\bibitem{tsfasman1982modular}
M.~A. Tsfasman, S.~Vl{\u{a}}dut, T.~Zink, Modular curves, {S}himura curves, and
  {G}oppa codes, better than {V}arshamov-{G}ilbert bound, Mathematische
  Nachrichten 109~(1) (1982) 21--28.

\bibitem{AS01}
A.~Samorodnitsky, On the optimum of {D}elsarte's linear program, J. Comb. Th.,
  Ser. A 96 (2001) 261--287.

\bibitem{navon2005delsarte}
M.~Navon, A.~Samorodnitsky, On {D}elsarte's linear programming bounds for
  binary codes, in: Proc. 46th Annual IEEE Symp.Found. Comp. Sci. (FOCS), IEEE
  Computer Society, 2005, pp. 327--338.

\bibitem{MS1977}
F.~J. MacWilliams, N.~J.~A. Sloane, The Theory of Error-Correcting Codes,
  North-Holland, 1997.

\bibitem{goemans1997semidefinite}
M.~X. Goemans, Semidefinite programming in combinatorial optimization,
  Mathematical Programming 79~(1-3) (1997) 143--161.

\bibitem{LL79}
L.~Lov{\'a}sz, On the {S}hannon capacity of a graph, {IEEE} Trans. Inf. Theory
  25~(1) (1979) 1--7.

\bibitem{BM95}
R.~Ba{\v{c}}{\'\i}k, S.~Mahajan, Semidefinite programming and its applications
  to {NP} problems, in: Computing and Combinatorics, Springer, 1995, pp.
  566--575.

\bibitem{AC85}
M.~O. Albertson, K.~L. Collins, Homomorphisms of 3-chromatic graphs, Discr.
  Math. 54~(2) (1985) 127--132.

\bibitem{FL92}
U.~Feige, L.~Lov{\'a}sz, Two-prover one-round proof systems: Their power and
  their problems, in: Proc. 24th ACM Symp. Theory Comp. (STOC), ACM, 1992, pp.
  733--744.

\bibitem{DR13}
D.~E. Roberson, Variations on a theme: Graph homomorphisms, Ph.D. thesis,
  University of Waterloo (2013).

\bibitem{RM12}
D.~E. Roberson, L.~Mancinska, Graph homomorphisms for quantum players, arXiv
  preprint arXiv:1212.1724.

\bibitem{CMR14}
T.~Cubitt, L.~Mancinska, D.~Roberson, S.~Severini, D.~Stahlke, A.~Winter,
  Bounds on entanglement assisted source-channel coding via the lov{\'a}sz
  $\vartheta$ number and its variants, {IEEE} Trans. Inf. Theory~(11) (2014)
  7330--7344.

\bibitem{delsarte1994application}
P.~Delsarte, Application and generalization of the {M}ac{W}illiams transform in
  coding theory, Proc. 15th Sympos. Inform. Theory in the Benelux 9 (1994) 44.

\bibitem{DK66}
D.~J. Kleitman, On a combinatorial conjecture of {E}rd{\"o}s, J. Comb. Theory
  1~(2) (1966) 209--214.

\bibitem{AS_Delsarte}
A.~Samorodnitsky,
  \href{http://www.cs.huji.ac.il/~salex/papers/old_sq_measure.ps}{Extremal
  properties of solutions for {D}elsarte's linear program}, preprint.
\newline\urlprefix\url{http://www.cs.huji.ac.il/~salex/papers/old_sq_measure.ps}

\bibitem{AS79}
A.~Schrijver, A comparison of the {D}elsarte and {L}ov{\'a}sz bounds, {IEEE}
  Trans. Inf. Theory 25~(4) (1979) 425--429.

\bibitem{KL01}
I.~Krasikov, S.~Litsyn, Survey of binary {K}rawtchouk polynomials, DIMACS
  series: Codes and association schemes 56 (2001) 199--212.

\bibitem{YP13}
Y.~Polyanskiy, Hypercontractivity of spherical averages in {H}amming space,
  Arxiv preprint arXiv:1309.3014.

\bibitem{blinovsky1986bounds}
V.~Blinovsky, Bounds for codes in the case of list decoding of finite volume,
  Prob. Peredachi Inform. 22~(1) (1986) 7--19.

\bibitem{TSN90}
M.~A. Tsfasman, S.~G. Vl\u{a}dut, D.~Nogin, Algebraic geometric codes: basic
  notions, Vol. 139, American Mathematical Soc., 1990.

\end{thebibliography}

\end{document}